
\documentclass[11pt,francais]{smfart}
\usepackage{axodraw}
\usepackage{amsfonts}
\usepackage{amsmath}
\usepackage{amssymb}
\usepackage{color}
\usepackage{graphicx}
\usepackage{xspace}
\input xy
\xyoption{all}
 \font \eightrm=cmr8

 \newcommand{\nc}{\newcommand}

 \setlength{\textheight}{9.9in}
 \setlength{\topmargin}{-35pt}
 \setlength{\textwidth}{6.7in}
 \setlength{\oddsidemargin}{-8pt}
 \setlength{\evensidemargin}{-8pt}


\hfuzz5pt \vfuzz5pt

\newtheorem{thm}{Th\'eor\`eme}
\newtheorem{exam}{Exemple}

\newtheorem{lem}[thm]{Lemme}
\newtheorem{prop}[thm]{Proposition}

\newtheorem{rmk}[thm]{Remarque}

\def\diagramme #1{\vskip 4mm \centerline {#1} \vskip 4mm}

\nc{\BA}{{\Bbb A}} \nc{\CC}{{\Bbb C}} \nc{\DD}{{\Bbb D}}
\nc{\EE}{{\Bbb E}} \nc{\FF}{{\Bbb F}} \nc{\GG}{{\Bbb G}}
\nc{\HH}{{\Bbb H}} \nc{\LL}{{\Bbb L}} \nc{\NN}{{\Bbb N}}
\nc{\PP}{{\Bbb P}} \nc{\QQ}{{\Bbb Q}} \nc{\RR}{{\Bbb R}}
\nc{\TT}{{\Bbb T}} \nc{\VV}{{\Bbb V}} \nc{\ZZ}{{\Bbb Z}}
\nc{\Cal}[1]{{\mathcal {#1}}}
\nc{\mop}[1]{\mathop{\hbox {\rm #1} }}
\nc{\smop}[1]{\mathop{\hbox {\eightrm #1} }}
\nc{\mopl}[1]{\mathop{\hbox {\rm #1} }\limits}
\nc{\frakg}{{\frak g}}
\nc{\g}[1]{{\frak {#1}}}
\def \restr#1{\mathstrut_{\textstyle |}\raise-8pt\hbox{$\scriptstyle #1$}}
\def \srestr#1{\mathstrut_{\scriptstyle |}\hbox to
  -1.5pt{}\raise-4pt\hbox{$\scriptscriptstyle #1$}}
\nc{\wt}{\widetilde}
\nc{\wh}{\widehat}
\nc{\un}{\hbox{\bf 1}}
\nc{\redtext}[1]{\textcolor{red}{\tt #1}}
\nc{\bluetext}[1]{\textcolor{blue}{#1}}
\nc{\comment}[1]{[[{\tt {#1}}]] }
\nc{\R}{{\mathbb R}}

\nc\fleche[1]{\mathop{\hbox to #1 mm{\rightarrowfill}}\limits}
\def\semi{\mathrel{\times}\kern -.85pt\joinrel\mathrel{\raise 1.4pt\hbox{${\scriptscriptstyle |}$}}}

\def\racine{\,{\scalebox{0.07}{
\begin{picture}(29,29) (360,-285)
    \SetWidth{6}
    \SetColor{Black}
    \Vertex(375,-271){20}
  \end{picture}
  }}\,}

\def\echela{\,{\scalebox{0.07}{
\begin{picture}(33,116) (353,-443)
    \SetWidth{6}
    \SetColor{Black}
    \Vertex(369,-428){20}
    \Vertex(369,-341){16}
    \Line(369,-341)(369,-419)
  \end{picture}
  }}\,}
  \def\echelb{\,{\scalebox{0.07}{
   \begin{picture}(33,195) (351,-363)
    \SetWidth{6}
    \SetColor{Black}
    \Vertex(369,-262){16}
    \Line(369,-262)(369,-340)
    \Vertex(368,-348){20}
    \Line(369,-184)(369,-262)
    \Vertex(369,-182){16}
  \end{picture}
  }}\,}
  \def\echelc{\,{\scalebox{0.07}{
   \begin{picture}(33,273) (351,-285)
    \SetWidth{6}
    \SetColor{Black}
    \Vertex(369,-184){16}
    \Line(369,-184)(369,-262)
    \Vertex(368,-270){20}
    \Line(369,-106)(369,-184)
    \Vertex(369,-104){16}
    \Line(369,-28)(369,-106)
    \Vertex(369,-26){16}
  \end{picture}
  }}\,}
  \def\arbrey{\,{\scalebox{0.07}{
  \begin{picture}(147,114) (299,-444)
    \SetWidth{6}
    \SetColor{Black}
    \Vertex(368,-429){20}
    \Vertex(313,-344){16}
    \Vertex(434,-348){16}
    \Line(312,-344)(368,-428)
    \Line(433,-348)(372,-428)
  \end{picture}
}}\,}
\def\arbreza{\,{\scalebox{0.07}{
\begin{picture}(147,200) (299,-358)
    \SetWidth{6}
    \SetColor{Black}
    \Vertex(368,-343){20}
    \Vertex(313,-258){16}
    \Vertex(434,-262){16}
    \Line(312,-258)(368,-342)
    \Line(433,-262)(372,-342)
    \Vertex(313,-172){16}
    \Line(311,-179)(311,-252)
  \end{picture}
}}\,}
\def\arbrezb{\,{\scalebox{0.07}{
\begin{picture}(147,119) (299,-439)
    \SetWidth{6}
    \SetColor{Black}
    \Vertex(368,-424){20}
    \Vertex(313,-339){16}
    \Vertex(434,-343){16}
    \Line(312,-339)(368,-423)
    \Line(433,-343)(372,-423)
    \Line(370,-340)(370,-413)
    \Vertex(371,-334){16}
  \end{picture}
  }}\,}
  \def\arbrema{\,{\scalebox{0.07}{
   \begin{picture}(149,215) (296,-345)
    \SetWidth{6}
    \SetColor{Black}
    \Line(370,-244)(370,-317)
    \Vertex(371,-238){16}
    \Line(310,-149)(366,-233)
    \Line(432,-150)(372,-234)
    \Vertex(310,-144){16}
    \Vertex(433,-144){16}
    \Vertex(371,-330){20}
  \end{picture}
  }}\,}
  \def\arbremb{\,{\scalebox{0.07}{
   \begin{picture}(205,388) (240,-172)
    \SetWidth{6}
    \SetColor{Black}
    \Line(370,-71)(370,-144)
    \Vertex(371,-65){16}
    \Line(310,24)(366,-60)
    \Line(432,23)(372,-61)
    \Vertex(310,29){16}
    \Vertex(433,29){16}
    \Vertex(371,-157){20}
    \Line(310,108)(310,35)
    \Vertex(309,118){16}
    \Line(374,207)(314,123)
    \Vertex(369,202){16}
    \Line(250,207)(306,123)
    \Vertex(254,202){16}
  \end{picture}
}}\,}
\def\arbremc{\,{\scalebox{0.07}{
\begin{picture}(149,306) (296,-254)
    \SetWidth{6}
    \SetColor{Black}
    \Line(370,-153)(370,-226)
    \Vertex(371,-147){16}
    \Line(310,-58)(366,-142)
    \Line(432,-59)(372,-143)
    \Vertex(310,-53){16}
    \Vertex(433,-53){16}
    \Vertex(371,-239){20}
    \Line(371,-65)(371,-138)
    \Vertex(371,-52){16}
    \Line(437,40)(377,-44)
    \Line(311,38)(367,-46)
    \Vertex(311,38){16}
    \Vertex(433,37){16}
  \end{picture}
}}\,}
\def\arbremd{\,{\scalebox{0.07}{
\begin{picture}(275,213) (234,-347)
    \SetWidth{6}
    \SetColor{Black}
    \Vertex(371,-332){20}
    \Line(309,-242)(365,-326)
    \Line(439,-238)(379,-322)
    \Vertex(437,-238){16}
    \Vertex(306,-240){16}
    \Line(245,-149)(301,-233)
    \Line(502,-149)(442,-233)
    \Line(422,-160)(436,-235)
    \Line(320,-157)(307,-231)
    \Vertex(321,-148){16}
    \Vertex(497,-154){16}
    \Vertex(248,-154){16}
    \Vertex(420,-148){16}
  \end{picture}
  }}\,}
  \def\arbreme{\,{\scalebox{0.07}{
  \begin{picture}(205,474) (240,-86)
    \SetWidth{6}
    \SetColor{Black}
    \Line(370,15)(370,-58)
    \Vertex(371,21){16}
    \Line(310,110)(366,26)
    \Line(432,109)(372,25)
    \Vertex(310,115){16}
    \Vertex(433,115){16}
    \Vertex(371,-71){20}
    \Line(310,194)(310,121)
    \Vertex(309,204){16}
    \Line(374,293)(314,209)
    \Vertex(369,288){16}
    \Line(250,293)(306,209)
    \Vertex(254,288){16}
    \Line(254,366)(254,293)
    \Vertex(255,374){16}
  \end{picture}
  }}\,}
  \def\arbremf{\,{\scalebox{0.07}{
  \begin{picture}(205,389) (240,-171)
    \SetWidth{6}
    \SetColor{Black}
    \Line(370,-70)(370,-143)
    \Vertex(371,-64){16}
    \Line(310,25)(366,-59)
    \Line(432,24)(372,-60)
    \Vertex(310,30){16}
    \Vertex(433,30){16}
    \Vertex(371,-156){20}
    \Line(310,109)(310,36)
    \Vertex(309,119){16}
    \Line(374,208)(314,124)
    \Vertex(369,203){16}
    \Line(250,208)(306,124)
    \Vertex(254,203){16}
    \Line(309,200)(309,127)
    \Vertex(310,204){16}
  \end{picture}
  }}\,}
  \def\arbremg{\,{\scalebox{0.07}{
  \begin{picture}(244,379) (201,-181)
    \SetWidth{6}
    \SetColor{Black}
    \Line(370,-80)(370,-153)
    \Vertex(371,-74){16}
    \Line(310,15)(366,-69)
    \Line(432,14)(372,-70)
    \Vertex(310,20){16}
    \Vertex(433,20){16}
    \Vertex(371,-166){20}
    \Vertex(264,98){16}
    \Vertex(359,100){16}
    \Vertex(215,183){16}
    \Vertex(302,184){16}
    \Line(359,99)(315,26)
    \Line(299,175)(265,103)
    \Line(214,182)(259,105)
    \Line(267,91)(308,25)
  \end{picture}
  }}\,}
  \def\arbremh{\,{\scalebox{0.07}{
  \begin{picture}(157,403) (296,-157)
    \SetWidth{6}
    \SetColor{Black}
    \Line(370,-56)(370,-129)
    \Vertex(371,-50){16}
    \Line(310,39)(366,-45)
    \Line(432,38)(372,-46)
    \Vertex(310,44){16}
    \Vertex(433,44){16}
    \Vertex(371,-142){20}
    \Line(371,32)(371,-41)
    \Vertex(371,45){16}
    \Line(372,127)(372,54)
    \Vertex(373,137){16}
    \Line(312,228)(368,144)
    \Line(437,226)(377,142)
    \Vertex(441,232){16}
    \Vertex(314,229){16}
  \end{picture}
  }}\,}
   \def\arbremi{\,{\scalebox{0.07}{
  \begin{picture}(205,388) (240,-172)
    \SetWidth{6}
    \SetColor{Black}
    \Line(370,-71)(370,-144)
    \Vertex(371,-65){16}
    \Line(310,24)(366,-60)
    \Line(432,23)(372,-61)
    \Vertex(310,29){16}
    \Vertex(433,29){16}
    \Vertex(371,-157){20}
    \Line(310,108)(310,35)
    \Vertex(309,118){16}
    \Line(374,207)(314,123)
    \Vertex(369,202){16}
    \Line(250,207)(306,123)
    \Vertex(254,202){16}
    \Line(434,109)(434,36)
    \Vertex(433,115){16}
  \end{picture}
  }}\,}
   \def\arbremj{\,{\scalebox{0.07}{
  \begin{picture}(235,388) (240,-172)
    \SetWidth{6}
    \SetColor{Black}
    \Line(370,-71)(370,-144)
    \Vertex(371,-65){16}
    \Line(310,24)(366,-60)
    \Line(432,23)(372,-61)
    \Vertex(310,29){16}
    \Vertex(433,29){16}
    \Vertex(371,-157){20}
    \Line(310,108)(310,35)
    \Vertex(309,118){16}
    \Line(374,207)(314,123)
    \Vertex(369,202){16}
    \Line(250,207)(306,123)
    \Vertex(254,202){16}
    \Vertex(463,-68){16}
    \Line(459,-73)(379,-153)
  \end{picture}
  }}\,}
  \def\arbremk{\,{\scalebox{0.07}{
   \begin{picture}(206,476) (240,-84)
    \SetWidth{6}
    \SetColor{Black}
    \Line(370,17)(370,-56)
    \Vertex(371,23){16}
    \Vertex(371,-69){20}
    \Line(370,105)(370,32)
    \Vertex(370,116){16}
    \Line(432,200)(372,116)
    \Vertex(434,203){16}
    \Line(310,203)(366,119)
    \Vertex(312,207){16}
    \Line(312,286)(312,213)
    \Vertex(312,292){16}
    \Line(256,375)(312,291)
    \Vertex(254,378){16}
    \Line(377,379)(317,295)
    \Vertex(375,376){16}
  \end{picture}
}}\,}
\def\arbreml{\,{\scalebox{0.07}{
\begin{picture}(205,476) (240,-84)
    \SetWidth{6}
    \SetColor{Black}
    \Line(370,17)(370,-56)
    \Vertex(371,23){16}
    \Vertex(371,-69){20}
    \Line(312,286)(312,213)
    \Vertex(312,292){16}
    \Line(256,375)(312,291)
    \Vertex(254,378){16}
    \Line(377,379)(317,295)
    \Vertex(375,376){16}
    \Line(434,111)(374,27)
    \Line(312,107)(368,23)
    \Vertex(433,113){16}
    \Vertex(311,111){16}
    \Line(312,197)(312,124)
    \Vertex(312,202){16}
  \end{picture}
  }}\,}
  \def\arbremm{\,{\scalebox{0.07}{
  \begin{picture}(152,391) (296,-169)
    \SetWidth{6}
    \SetColor{Black}
    \Line(370,-68)(370,-141)
    \Vertex(371,-62){16}
    \Line(310,27)(366,-57)
    \Line(432,26)(372,-58)
    \Vertex(310,32){16}
    \Vertex(433,32){16}
    \Vertex(371,-154){20}
    \Line(371,20)(371,-53)
    \Vertex(371,33){16}
    \Line(438,122)(378,38)
    \Line(314,116)(370,32)
    \Vertex(312,124){16}
    \Vertex(436,124){16}
    \Line(311,198)(311,125)
    \Vertex(311,208){16}
  \end{picture}
}}\,}
\def\arbremn{\,{\scalebox{0.07}{
 \begin{picture}(152,308) (296,-252)
    \SetWidth{6}
    \SetColor{Black}
    \Line(370,-151)(370,-224)
    \Vertex(371,-145){16}
    \Line(310,-56)(366,-140)
    \Line(432,-57)(372,-141)
    \Vertex(310,-51){16}
    \Vertex(433,-51){16}
    \Vertex(371,-237){20}
    \Line(371,-63)(371,-136)
    \Vertex(371,-50){16}
    \Line(438,39)(378,-45)
    \Line(314,33)(370,-51)
    \Vertex(312,41){16}
    \Vertex(436,41){16}
    \Line(372,32)(372,-41)
    \Vertex(372,42){16}
  \end{picture}
  }}\,}
\def\arbremo{\,{\scalebox{0.07}{
 \begin{picture}(205,307) (243,-253)
    \SetWidth{6}
    \SetColor{Black}
    \Line(370,-152)(370,-225)
    \Vertex(371,-146){16}
    \Line(310,-57)(366,-141)
    \Line(432,-58)(372,-142)
    \Vertex(310,-52){16}
    \Vertex(433,-52){16}
    \Vertex(371,-238){20}
    \Line(371,-64)(371,-137)
    \Vertex(371,-51){16}
    \Line(438,38)(378,-46)
    \Vertex(436,40){16}
    \Vertex(257,40){16}
    \Line(258,32)(308,-52)
    \Line(328,34)(370,-43)
    \Vertex(328,40){16}
  \end{picture}
   }}\,}
   \def\arbremp{\,{\scalebox{0.07}{
   \begin{picture}(199,307) (249,-253)
    \SetWidth{6}
    \SetColor{Black}
    \Line(370,-152)(370,-225)
    \Vertex(371,-146){16}
    \Line(432,-58)(372,-142)
    \Vertex(433,-52){16}
    \Vertex(371,-238){20}
    \Line(371,-64)(371,-137)
    \Vertex(371,-51){16}
    \Line(438,38)(378,-46)
    \Vertex(436,40){16}
    \Vertex(328,40){16}
    \Vertex(322,-53){16}
    \Vertex(263,-52){16}
    \Line(325,-60)(366,-141)
    \Line(264,-51)(367,-146)
    \Line(328,34)(369,-46)
  \end{picture}
  }}\,}
   \def\arbremq{\,{\scalebox{0.07}{
  \begin{picture}(159,403) (294,-157)
    \SetWidth{6}
    \SetColor{Black}
    \Line(370,-56)(370,-129)
    \Vertex(371,-50){16}
    \Vertex(371,-142){20}
    \Line(371,32)(371,-41)
    \Vertex(371,45){16}
    \Line(372,127)(372,54)
    \Vertex(373,137){16}
    \Line(312,228)(368,144)
    \Line(437,226)(377,142)
    \Vertex(441,232){16}
    \Vertex(314,229){16}
    \Line(310,133)(366,49)
    \Line(439,135)(379,51)
    \Vertex(440,136){16}
    \Vertex(308,136){16}
  \end{picture}
  }}\,}
 \def\arbremr{\,{\scalebox{0.07}{
 \begin{picture}(154,306) (296,-254)
    \SetWidth{6}
    \SetColor{Black}
    \Line(370,-153)(370,-226)
    \Vertex(371,-147){16}
    \Line(310,-58)(366,-142)
    \Line(432,-59)(372,-143)
    \Vertex(310,-53){16}
    \Vertex(433,-53){16}
    \Vertex(371,-239){20}
    \Line(371,-65)(371,-138)
    \Vertex(371,-52){16}
    \Line(437,40)(377,-44)
    \Line(311,38)(367,-46)
    \Vertex(311,38){16}
    \Vertex(433,37){16}
    \Line(437,-149)(377,-233)
    \Vertex(438,-149){16}
  \end{picture}
  }}\,}
  \def\arbrems{\,{\scalebox{0.07}{
   \begin{picture}(274,303) (237,-257)
    \SetWidth{6}
    \SetColor{Black}
    \Line(370,-156)(370,-229)
    \Vertex(371,-150){16}
    \Line(310,-61)(366,-145)
    \Line(432,-62)(372,-146)
    \Vertex(310,-56){16}
    \Vertex(433,-56){16}
    \Vertex(371,-242){20}
    \Line(254,25)(310,-59)
    \Vertex(251,31){16}
    \Vertex(499,28){16}
    \Line(492,22)(432,-62)
    \Vertex(409,30){16}
    \Vertex(340,32){16}
    \Line(341,29)(310,-50)
    \Line(409,23)(432,-52)
  \end{picture}
  }}\,}
\def\arbremt{\,{\scalebox{0.07}{
\begin{picture}(275,286) (234,-274)
    \SetWidth{6}
    \SetColor{Black}
    \Vertex(371,-259){20}
    \Line(309,-169)(365,-253)
    \Line(439,-165)(379,-249)
    \Vertex(437,-165){16}
    \Vertex(306,-167){16}
    \Line(245,-76)(301,-160)
    \Line(502,-76)(442,-160)
    \Line(422,-87)(436,-162)
    \Vertex(321,-75){16}
    \Vertex(497,-81){16}
    \Vertex(248,-81){16}
    \Vertex(420,-75){16}
    \Line(319,-83)(306,-157)
    \Line(249,-6)(249,-74)
    \Vertex(250,-2){16}
  \end{picture}
}}\,}
   \def\arbremu{\,{\scalebox{0.07}{
  \begin{picture}(275,209) (234,-351)
    \SetWidth{6}
    \SetColor{Black}
    \Vertex(371,-336){20}
    \Line(309,-246)(365,-330)
    \Line(439,-242)(379,-326)
    \Vertex(437,-242){16}
    \Vertex(306,-244){16}
    \Line(245,-153)(301,-237)
    \Line(502,-153)(442,-237)
    \Line(422,-164)(436,-239)
    \Vertex(497,-158){16}
    \Vertex(248,-158){16}
    \Line(304,-157)(304,-237)
    \Vertex(304,-157){16}
    \Vertex(357,-156){16}
    \Line(355,-159)(309,-238)
    \Vertex(421,-157){16}
  \end{picture}
  }}\,}
   \def\arbremv{\,{\scalebox{0.07}{
  \begin{picture}(275,208) (234,-352)
    \SetWidth{6}
    \SetColor{Black}
    \Vertex(371,-337){20}
    \Line(309,-247)(365,-331)
    \Line(439,-243)(379,-327)
    \Vertex(437,-243){16}
    \Vertex(306,-245){16}
    \Line(245,-154)(301,-238)
    \Line(502,-154)(442,-238)
    \Line(422,-165)(436,-240)
    \Vertex(497,-159){16}
    \Vertex(248,-159){16}
    \Vertex(421,-158){16}
    \Line(371,-248)(371,-328)
    \Vertex(371,-241){16}
    \Line(338,-162)(307,-241)
    \Vertex(336,-159){16}
  \end{picture}
  }}\,}
   \def\arbremw{\,{\scalebox{0.07}{
  \begin{picture}(309,284) (200,-276)
    \SetWidth{6}
    \SetColor{Black}
    \Vertex(371,-261){20}
    \Line(309,-171)(365,-255)
    \Line(439,-167)(379,-251)
    \Vertex(437,-167){16}
    \Vertex(306,-169){16}
    \Line(245,-78)(301,-162)
    \Line(502,-78)(442,-162)
    \Line(422,-89)(436,-164)
    \Vertex(497,-83){16}
    \Vertex(248,-83){16}
    \Vertex(420,-77){16}
    \Vertex(214,-7){16}
    \Vertex(286,-6){16}
    \Line(216,-13)(244,-80)
    \Line(284,-12)(253,-79)
  \end{picture}
  }}\,}

  \def\arbreha{\,{\scalebox{0.07}{
   \begin{picture}(149,299) (296,-261)
    \SetWidth{6}
    \SetColor{Black}
    \Vertex(371,-154){16}
    \Line(310,-65)(366,-149)
    \Line(432,-66)(372,-150)
    \Vertex(310,-60){16}
    \Vertex(433,-60){16}
    \Vertex(371,-246){20}
    \Line(310,16)(310,-57)
    \Line(371,-163)(371,-236)
    \Vertex(311,24){16}
  \end{picture}
  }}\,}
  \def\arbrehb{\,{\scalebox{0.07}{
\begin{picture}(149,217) (296,-343)
    \SetWidth{6}
    \SetColor{Black}
    \Vertex(371,-236){16}
    \Line(310,-147)(366,-231)
    \Line(432,-148)(372,-232)
    \Vertex(310,-142){16}
    \Vertex(433,-142){16}
    \Vertex(371,-328){20}
    \Line(371,-245)(371,-318)
    \Vertex(370,-140){16}
    \Line(370,-150)(370,-223)
  \end{picture}
  }}\,}
  \def\arbrehc{\,{\scalebox{0.07}{
  \begin{picture}(154,305) (293,-255)
    \SetWidth{6}
    \SetColor{Black}
    \Vertex(371,-148){16}
    \Vertex(371,-240){20}
    \Line(371,-157)(371,-230)
    \Line(370,-66)(370,-139)
    \Vertex(371,-59){16}
    \Line(435,30)(375,-54)
    \Line(308,34)(364,-50)
    \Vertex(435,33){16}
    \Vertex(307,36){16}
  \end{picture}
  }}\,}
  \def\arbrehd{\,{\scalebox{0.07}{
   \begin{picture}(158,215) (296,-345)
    \SetWidth{6}
    \SetColor{Black}
    \Vertex(371,-238){16}
    \Line(310,-149)(366,-233)
    \Line(432,-150)(372,-234)
    \Vertex(310,-144){16}
    \Vertex(433,-144){16}
    \Vertex(371,-330){20}
    \Line(371,-247)(371,-320)
    \Vertex(442,-247){16}
    \Line(440,-251)(376,-327)
  \end{picture}
  }}\,}
\begin{document}
\title{Lois pr\'e-Lie en interaction}

\author{Dominique Manchon}
\address{Universit\'e Blaise Pascal,
         C.N.R.S.-UMR 6620,
         63177 Aubi\`ere, France}       
         \email{manchon@math.univ-bpclermont.fr}
         \urladdr{http://math.univ-bpclermont.fr/~manchon/}
\author{ Abdellatif Sa\" idi}
\address{Universit\'e de Monastir, Unit\'e de recherche physique Math\'ematique, Facult\'e des sciences de Monastir, Avenue de l'environnement
  5019, Tunisie}

         \email{saidiabdellatif@yahoo.fr}

\date{7 juillet 2009}
\begin{abstract}
Dans l'article \cite{CEM}, Damien Calaque, Kurusch Ebrahimi-Fard et Dominique Manchon ont introduit un nouveau coproduit sur
une alg\`ebre commutative de for\^ets d'arbres enracin\'es $\mathcal{H}$. L'alg\`ebre
de Lie des \'el\'ements primitifs du dual gradu\'e $ \mathcal{H}^{0}$ est munie d'une
structure pr\'e-Lie \`a gauche, not\'ee $\vartriangleright$  qui s'exprime en
termes d'insertion d'un arbre dans un autre.
Dans ce travail nous montrons qu'il y a une relation ``de d\'erivation'' reliant cette structure
 \`a la structure pr\'e-Lie \`a gauche de greffe d'un
arbre sur un autre \cite{ChaLiv}, not\'ee $\rightarrow$, obtenue sur l'alg\`ebre de Lie des \'el\'ements primitifs du
dual gradu\'e $\mathcal{H}_{CK}^0$  de l'alg\`ebre de Hopf de Connes-Kreimer
$\mathcal{H}_{CK}$ \cite{CK1}.
\end{abstract}
\begin{altabstract}
In \cite{CEM}, Damien Calaque, Kurusch Ebrahimi-Fard and Dominique Manchon
define a Hopf algebra $\mathcal{H}$ by introducing a new coproduct on a
commutative algebra of rooted forests. The space of primitive elements of the graded dual
$\mathcal{H}^{0}$ is endowed with a left pre-Lie product denoted by
$\vartriangleright,$ defined in terms of insertion of a tree inside another. In
this work we prove a  ``derivation'' relation between the pre-Lie structure $\vartriangleright$
and the left pre-Lie product $\rightarrow$ \cite{ChaLiv} on the space of
primitive elements of the graded dual $\mathcal{H}^{0}_{CK}$ of the
Connes-Kreimer Hopf algebra $\mathcal{H}_{CK}$\cite{CK1}, defined by grafting.  
\end{altabstract}
\maketitle


\tableofcontents


\section{Introduction}
\label{sect:intro}
Soit $K$ un corps. Une $K$-alg\`ebre magmatique $(E,*)$ est un espace vectoriel $E$ muni
d'une application bilin\'eaire qui \`a chaque bipoint $(x,y)$ associe $x*y$. Pour
tout $x\in E$ on note $L_{x}$ l'application lin\'eaire de $E$ d\'efinie par :
\begin{equation}
L_{x}(y)=x*y,~\text{pour tout}~y\in E.
\end{equation}
On munit $E$ du crochet suivant :
\begin{equation}
[x,y]=x*y-y*x.
\end{equation}
Soit $ \mathcal{L}(E)=\{f:E\longrightarrow E,~f~\text{lin\'eaire}\}.$ On munit
$\mathcal{L}(E)$ du crochet de Lie habituel :
$$[f,g]=f\circ g-g\circ f,~\text{pour tout} f,g\in\mathcal{L}(E).$$
$(E,*)$ est dite alg\`ebre pr\'e-Lie \`a gauche \cite{AgraGam} si :
\begin{equation}
L_{[x,y]}=[L_{x},L_{y}],~\text{pour tout}~x,y\in E,
\end{equation}
i.e. 
\begin{equation}
(x*y-y*x)*z=x*(y*z)-y*(x*z),~\text{pour tout}~~x,y,z\in E.
\end{equation}
Ainsi une alg\`ebre pr\'e-Lie \`a gauche \cite{ChaLiv} est un $K$-espace vectoriel $L$ muni d'une
application bilin\'eaire $*$ v\'erifiant la relation suivante: Pour tout $x,y$ et
$z$ dans $L$ :
\begin{equation}\label{eqprelie}
(x*y)*z-x*(y*z)=(y*x)*z-y*(x*z).
\end{equation}
C'est-\`a-dire si on note par $(x,y,z)$ l'expression $(x*y)*z-x*(y*z)$ la
relation (\ref{eqprelie}) s'\'ecrit :
\begin{equation}
(x,y,z)=(y,x,z).
\end{equation}
De la m\^eme mani\`ere si on a :
\begin{equation}
(x,y,z)=(x,z,y)~~~\text{pour tout}~~ x,y,z\in L,
\end{equation}
on dira que $L$ est une alg\`ebre pr\'e-Lie \`a droite.

En particulier, si $(L,*)$ est une alg\`ebre pr\'e-Lie \`a gauche alors $(L,*^{\smop{op}})$
est une alg\`ebre pr\'e-Lie \`a droite o\`u $*^{\smop{op}}$ est d\'efini par :
\begin{equation}
x*^{\smop{op}}y=y*x.
\end{equation}
Une alg\`ebre pr\'e-Lie $(L,*)$ munie du crochet :
\begin{equation}
[x,y]=x*y-y*x~~ \text{pour tout}~x,y\in L,
\end{equation}
est une alg\`ebre de Lie.
\section{L'alg\`ebre de Hopf de Connes-Kreimer  $\mathcal{H}_{CK}$}
\subsection{D\'efinition}
Soit $\mathcal{T}_n $ l'espace vectoriel engendr\'e par les arbres enracin\'es
\`a $n$ sommets. Soit $\mathcal{T}$ l'espace vectoriel
gradu\'e $\bigoplus_{n\geq 1} \mathcal{T}_{n}$. L'alg\`ebre de Hopf de Connes-Kreimer \cite{CK1} est d\'efinie comme l'alg\`ebre
sym\'etrique de $\mathcal{T}$, i.e :~ $ \mathcal{H}_{CK} =S(\mathcal{T}) $. On note $\un$ l'unit\'e de
$\mathcal{H}_{CK}$ identifi\'ee \`a l'arbre vide. Le coproduit $\Delta_{CK}$ \cite{F} est d\'efini par :
\begin{equation}
\Delta_{CK}(t)=\un\otimes t+t\otimes\un+\sum_{c\in \mathcal{A}dm(t)}{P^{c}(t)\otimes R^{c}(t)},
\end{equation} 
o\`u $\mathcal{A}dm(t)$ d\'esigne l'ensemble des coupes admissibles $c$ de l'arbre
$t$, o\`u $P^{c}(t)$ correspond au branchage et $R^{c}(t)$ correspond au
tronc. On rappelle qu'une coupe est un ensemble non vide d'ar\^etes. Une coupe est dite non admissible s'il existe un sommet de
$t$ tel que le trajet de la 
racine \`a ce dernier rencontre au moins $2$ ar\^etes de la coupe. Une coupe
\'el\'ementaire est une coupe contenant une seule ar\^ete de $t$. Si $ F=t_{1}.t_{2}...t_{n}$ est  une for\^et, on a :
\begin{equation}
\Delta_{CK}(F)=\Delta_{CK}(t_{1})\Delta_{CK}(t_{2})...\Delta_{CK}(t_{n}).
\end{equation}

$\mathcal{H}_{CK}$ est une alg\`ebre de Hopf gradu\'ee, la graduation est
suivant le nombre de sommets.
\begin{exam}
\begin{equation}
\Delta_{CK}(\arbreza)=\un\otimes\arbreza+\arbreza\otimes
\un+\racine\otimes\echelb+\echela\otimes\echela+\racine\otimes\arbrey+\echela\racine\otimes\racine
+\racine\racine\otimes\echela
\end{equation}
\end{exam}
\subsection{Dual gradu\'e de $ \mathcal{H}_{CK}$}
Soit $\mathcal{H}_{CK} ^{0}$ le dual gradu\'e de $\mathcal{H}_{CK}$ :
$\mathcal{H}_{CK} ^{0}=\bigoplus(\mathcal{H}_{CK,n})^{*}$ o\`u
$\mathcal{H}_{CK,n}$ d\'esigne l'espace vectoriel engendr\'e par les
for\^ets \`a $n$ sommets. Alors $\mathcal{H}_{CK} ^{0}$ est une alg\`ebre de
Hopf \cite{F}. On note $(\delta_{u})$ la base duale dans
$(\mathcal{H}_{CK,n})^{*}$ de la base des for\^ets de degr\'e $n$ dans
$\mathcal{H}_{CK,n}$. La correspondance $u\mapsto \delta_u$ d\'efinit un
isomorphisme lin\'eaire de $\Cal H_{CK}$ sur son dual gradu\'e $\Cal H_{CK}^0$.
\begin{lem} (voir \cite{CEM})
Pour tout $t$ arbre dans $\mathcal{T}$ l'\'el\'ement $\delta_{t}$ est un caract\`ere
infinit\'esimal ( i.e. $\delta_{t}(xy)=\delta_{t}(x)e(y)+e(x)\delta_{t}(y)$ pour
tout $x,y$ arbres dans $\mathcal{T}$, o\`u $e$ est l'el\'ement neutre pour le
produit de convolution $\ast$). 
\end{lem}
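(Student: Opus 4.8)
The plan is to reduce the identity to the forest basis and then use the explicit description of the product in $\mathcal{H}_{CK}$. Both sides of $\delta_{t}(xy)=\delta_{t}(x)e(y)+e(x)\delta_{t}(y)$ are bilinear in $x$ and $y$, so it suffices to verify it when $x=F$ and $y=G$ are forests, i.e. basis elements of $\mathcal{H}_{CK}=S(\mathcal{T})$. The product $FG$ is then the forest obtained by juxtaposing the connected components of $F$ with those of $G$; in particular $FG$ is a \emph{single} rooted tree if and only if one of the two forests is the empty forest $\un$ and the other is that tree.

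Next I record the two elementary facts about the relevant linear forms: $e(\un)=1$ while $e$ vanishes on every forest of positive degree, and $\delta_{t}(\un)=0$ while, for a forest $F$, $\delta_{t}(F)=1$ exactly when $F=t$ and $\delta_{t}(F)=0$ otherwise. The verification then splits into cases. If $F=\un$ then $FG=G$ and the right-hand side equals $\delta_{t}(\un)e(G)+e(\un)\delta_{t}(G)=\delta_{t}(G)=\delta_{t}(FG)$; the case $G=\un$ is symmetric. If neither $F$ nor $G$ equals $\un$, then $FG$ has at least two connected components, so $\delta_{t}(FG)=0$, while on the other side $e(F)=e(G)=0$ forces the right-hand side to vanish as well. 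These cases exhaust all possibilities and establish the claim.

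Equivalently, and more conceptually, one can note that a linear form $\phi$ on the connected graded bialgebra $\mathcal{H}_{CK}$ is an infinitesimal character precisely when $\phi(\un)=0$ and $\phi$ vanishes on $\overline{\mathcal{H}_{CK}}\cdot\overline{\mathcal{H}_{CK}}$, where $\overline{\mathcal{H}_{CK}}=\ker e$ is the augmentation ideal; this subspace is spanned by forests having at least two connected components, so its annihilator in the forest basis is exactly the span of the $\delta_{t}$ with $t$ a tree, which gives the lemma directly. There is no genuine obstacle here: the only point that requires a little care is the bookkeeping identifying the statement ``$FG$ is a single tree'' with ``$F$ or $G$ is the empty forest'', and this is immediate from the definition of $\mathcal{H}_{CK}$ as the symmetric (polynomial) algebra on the space of rooted trees.
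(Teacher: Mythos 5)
Your argument is correct. Note that the paper itself gives no proof of this lemma: it simply refers the reader to \cite{CEM}, so there is no internal argument to compare against. Your direct verification supplies exactly the missing details: by bilinearity one reduces to basis forests $F,G$ of $\mathcal{H}_{CK}=S(\mathcal{T})$, observes that $e(\un)=1$ and $e$ kills every nonempty forest while $\delta_t(\un)=0$ (trees in $\mathcal{T}$ have at least one vertex), and then the three cases ($F=\un$, $G=\un$, both nonempty) each make the identity $\delta_t(FG)=\delta_t(F)e(G)+e(F)\delta_t(G)$ hold, the last case because $FG$ then has at least two connected components and so cannot equal a single tree $t$. Your closing reformulation --- an infinitesimal character is a linear form vanishing on $K\un\oplus\overline{\mathcal{H}_{CK}}\cdot\overline{\mathcal{H}_{CK}}$, and that second subspace is precisely the span of forests with at least two components --- is the cleanest way to see it and also explains why the $\delta_t$ with $t$ a tree span the whole space of infinitesimal characters. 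One further merit of your write-up: the lemma as printed only quantifies over $x,y$ trees, for which the identity is trivially $0=0$; you prove the identity for all $x,y\in\mathcal{H}_{CK}$, which is what is actually needed for $\delta_t$ to be an infinitesimal character (and for the subsequent Lie-algebra lemma). No gaps.
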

\begin{lem}
L'espace des  caract\`eres infinit\'esimaux est une alg\`ebre de Lie pour le crochet :
\begin{equation}
[\delta_{t},\delta_{t'}]=\delta_{t}\ast\delta_{t'}-\delta_{t'}\ast\delta_{t}.
\end{equation}
\end{lem}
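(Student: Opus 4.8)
The plan is to prove that the space $\mathfrak g$ of infinitesimal characters is a Lie \emph{subalgebra} of the convolution algebra $(\mathcal{H}_{CK}^0,\ast)$, the bracket being the commutator $[\varphi,\psi]=\varphi\ast\psi-\psi\ast\varphi$. Recall that an infinitesimal character is a linear form $\varphi$ with $\varphi(ab)=\varphi(a)e(b)+e(a)\varphi(b)$ for all $a,b$, that $\mathcal{H}_{CK}^0$ is an associative algebra for $\ast$ with unit $e$ (the counit of $\mathcal{H}_{CK}$, which is moreover an algebra morphism), and that by the previous lemma every $\delta_t$, $t\in\mathcal{T}$, lies in $\mathfrak g$. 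Granting the \emph{stability} of $\mathfrak g$ under the commutator, the remaining Lie-algebra axioms are free: antisymmetry is built into the definition of $[\,\cdot\,,\cdot\,]$, and the Jacobi identity
\[
[[\varphi,\psi],\chi]+[[\psi,\chi],\varphi]+[[\chi,\varphi],\psi]=0
\]
is the universal identity satisfied by the commutator in any associative algebra, here $(\mathcal{H}_{CK}^0,\ast)$, restricted to the subspace $\mathfrak g$. So the only non-formal point is the stability statement.

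To establish stability, I would take $\varphi,\psi\in\mathfrak g$ and compute $\varphi\ast\psi$ on a product $xy$ with $x,y\in\mathcal{H}_{CK}$. As $\Delta_{CK}$ is an algebra morphism, $\Delta_{CK}(xy)=\sum x_{(1)}y_{(1)}\otimes x_{(2)}y_{(2)}$ in Sweedler notation, so $(\varphi\ast\psi)(xy)=\sum \varphi(x_{(1)}y_{(1)})\,\psi(x_{(2)}y_{(2)})$. Expanding each factor by the defining relation $\varphi(ab)=\varphi(a)e(b)+e(a)\varphi(b)$ (and the analogous one for $\psi$) yields four terms; collapsing the $e$-factors by means of the counit axioms $\sum e(x_{(1)})x_{(2)}=x=\sum x_{(1)}e(x_{(2)})$ — and the multiplicativity of $e$ — one should arrive at
\[
(\varphi\ast\psi)(xy)=(\varphi\ast\psi)(x)\,e(y)+e(x)\,(\varphi\ast\psi)(y)+\varphi(x)\psi(y)+\psi(x)\varphi(y).
\]

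The crux is that the ``defect'' $\varphi(x)\psi(y)+\psi(x)\varphi(y)$ is \emph{symmetric} in $\varphi$ and $\psi$; subtracting from the displayed identity the one obtained by exchanging $\varphi$ and $\psi$ therefore kills it and leaves
\[
(\varphi\ast\psi-\psi\ast\varphi)(xy)=(\varphi\ast\psi-\psi\ast\varphi)(x)\,e(y)+e(x)\,(\varphi\ast\psi-\psi\ast\varphi)(y),
\]
i.e. $[\varphi,\psi]$ is again an infinitesimal character. Taking $\varphi=\delta_t$ and $\psi=\delta_{t'}$ then gives exactly the stability needed to conclude. I do not expect a genuine obstacle here: the single non-formal step is the evaluation of $(\varphi\ast\psi)(xy)$, which is routine, if slightly tedious, Sweedler bookkeeping, and the key phenomenon — the cancellation of the defect in the commutator — is immediate once its symmetry is noticed. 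The only point requiring a little care is that $e$ is both the convolution unit and an algebra morphism, which is what legitimizes applying the counit axioms term by term inside the Sweedler sums.
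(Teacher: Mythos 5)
Your proof is correct: the Sweedler computation of $(\varphi\ast\psi)(xy)$, the identification of the symmetric defect term $\varphi(x)\psi(y)+\psi(x)\varphi(y)$, and its cancellation in the commutator are exactly right, and you are also right that antisymmetry and Jacobi come for free from associativity of $\ast$. The paper's own proof is a one-line appeal to the fact that the $\delta_t$ are primitive in $\mathcal{H}_{CK}^{0}$ and that primitive elements are closed under the commutator; your argument is precisely that standard fact unpacked through the duality (primitivity of $\delta$ in the dual is the same condition as $\delta(xy)=\delta(x)e(y)+e(x)\delta(y)$), so the two proofs coincide in substance, yours being the more explicit and self-contained.
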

\begin{proof}
$\delta_{t}$ et $\delta_{t'}$ sont primitifs dans $\mathcal{H}_{CK} ^{0}$ donc
leur crochet de Lie aussi.\\
\end{proof}
\subsection{La loi pr\'e-Lie de greffe}
\label{gr}
 Soient $t,t'$ et $x$ des  arbres dans
$\mathcal{T}$, on d\'efinit :
\begin{equation}
<\delta_{t\rightarrow t'},x>=\sum_{c~\in \mathcal{E}lm(x)}{<\delta_{t},P^{c}(x)><\delta_{t'},R^{c}(x)>},
\end{equation}
o\`u $\mathcal{E}lm(x)$ d\'esigne l'ensemble des coupes  \'el\'ementaires de
$x$. Ainsi
$<\delta_{t\rightarrow t'},x>$ est le nombre de coupes \'el\'ementaires
de $x$ tels que $P^{c}(x)=t$ et $R^{c}(x)=t'.$  Par suite :
\begin{equation}
t\rightarrow t'=\sum_{x~ \text{arbre}}{N(t,t',x)x},
\end{equation}
o\`u $N(t,t',x)$ est le nombre de coupes \'el\'ementaires de $x$ tels que
$P^{c}(x)=t$ et $R^{c}(x)=t'$. On obtient :
\begin{equation}
[\delta_{t},\delta_{t'}]=\delta_{t\rightarrow t' -t'\rightarrow t}.
\end{equation}

\begin{exam}
\begin{equation}
\racine \rightarrow \arbrey = \arbreza + 3 \arbrezb
\end{equation}
 \end{exam}
Pour  un arbre $t$ on note $\sigma(t)$ son facteur de sym\'etrie,
c'est-\`a-dire le nombre d'automorphismes de $t$. On d\'efinit une autre loi $\rightarrow_{\sigma}$ li\'ee \`a la
pr\'ec\'edente comme suit :
\begin{equation}
t \rightarrow_{\sigma} t' =\sum_{x~\text{arbre}}{M(t,t',x)x},
\end{equation}
o\`u $M(t,t',x)=N(t,t',x)\frac{\sigma(t)\sigma(t')}{\sigma(x)}$  est le nombre
de mani\`eres de greffer $t$ sur $t'$ pour obtenir $x$.
\begin{exam}
\begin{equation}
\racine \rightarrow_{\sigma} \arbrey =2 \arbreza + \arbrezb
\end{equation}
\end{exam}
On a de fa\c con \'evidente :
\begin{equation}\label{iso}
\phi(t\to_\sigma t')=\phi(t)\to\phi(t'),
\end{equation}
o\`u $\phi$ est l'isomorphisme lin\'eaire de $\Cal T$ dans $\Cal T$ d\'efini
par $\phi(t)=\sigma(t)t$ pour tout arbre $t$.
\begin{prop}\label{greffe}
Les lois de greffe $\to$ et $\to_\sigma$ sont des lois pr\'e-Lie \`a gauche.
\end{prop}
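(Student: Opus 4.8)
La stratégie est de ramener les deux lois à la description combinatoire usuelle de la greffe (celle du produit pré-Lie libre de Chapoton-Livernet \cite{ChaLiv}). Notant $V(t)$ l'ensemble des sommets d'un arbre $t$ et, pour $s\in V(t')$, $t'\circ_s t$ l'arbre obtenu en reliant la racine de $t$ au sommet $s$ de $t'$ par une nouvelle arête, je commencerais par établir la formule
\[
t\to_\sigma t'=\sum_{s\in V(t')}t'\circ_s t
\]
(somme prise dans $\Cal T$, où les arbres isomorphes sont identifiés). Cela revient à reconnaître dans le coefficient $M(t,t',x)$, défini comme le nombre de manières de greffer $t$ sur $t'$ pour obtenir $x$, le cardinal de $\{s\in V(t') : t'\circ_s t\simeq x\}$, ce qui est compatible avec l'expression $M(t,t',x)=N(t,t',x)\frac{\sigma(t)\sigma(t')}{\sigma(x)}$ ; sur l'exemple $\racine\to_\sigma\arbrey=2\arbreza+\arbrezb$ de l'introduction on le vérifie directement.

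Je montrerais ensuite que $\to_\sigma$ vérifie (\ref{eqprelie}) ; par bilinéarité on se ramène à trois arbres $t_1,t_2,t_3$. En posant $G(u,t)=\sum_{s\in V(u)}u\circ_s t$ (bilinéaire en ses deux arguments), de sorte que $t\to_\sigma t'=G(t',t)$, on a
\[
(t_1\to_\sigma t_2)\to_\sigma t_3-t_1\to_\sigma(t_2\to_\sigma t_3)=G\big(t_3,G(t_2,t_1)\big)-G\big(G(t_3,t_2),t_1\big).
\]
Le point décisif est que, dans $G\big(G(t_3,t_2),t_1\big)$, le sommet en lequel on greffe $t_1$ parcourt $V(t_3)\sqcup V(t_2)$ ; comme la racine de $t_2\circ_w t_1$ reste celle de $t_2$, et comme greffer en deux sommets distincts commute, les termes où ce sommet appartient à $V(t_2)$ reconstituent exactement $G\big(t_3,G(t_2,t_1)\big)$. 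Il reste
\[
(t_1\to_\sigma t_2)\to_\sigma t_3-t_1\to_\sigma(t_2\to_\sigma t_3)=-\sum_{s,v\in V(t_3)}(t_3\circ_s t_2)\circ_v t_1,
\]
expression inchangée par échange de $t_1$ et $t_2$ (il suffit d'échanger les rôles des sommets $s$ et $v$, les deux greffes sur $t_3$ étant indépendantes). Cette symétrie en les deux premières variables est exactement (\ref{eqprelie}), donc $(\Cal T,\to_\sigma)$ est pré-Lie à gauche.

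Pour $\to$, j'utiliserais enfin l'identité (\ref{iso}) : comme $\phi$ est un isomorphisme linéaire et $\phi(t\to_\sigma t')=\phi(t)\to\phi(t')$, l'application $\phi$ envoie l'associateur de $\to_\sigma$ sur celui de $\to$,
\[
\phi\big((x\to_\sigma y)\to_\sigma z-x\to_\sigma(y\to_\sigma z)\big)=(\phi x\to\phi y)\to\phi z-\phi x\to(\phi y\to\phi z),
\]
si bien que la symétrie en les deux premières variables, établie pour $\to_\sigma$, se transporte à $\to$ via $\phi^{-1}$. Ainsi $(\Cal T,\to)$ est lui aussi pré-Lie à gauche.

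Le principal obstacle sera la première étape : il faut relier proprement le décompte ``brut'' des arêtes de $x$ (qui fournit $N(t,t',x)$) au décompte des sommets de greffe de $t'$ (qui fournit $M(t,t',x)$), sans se laisser piéger par les facteurs de symétrie et en gardant à l'esprit que l'on travaille dans $\Cal T$. Une fois cette description combinatoire en place, tout le reste n'est qu'une réécriture ; on pourrait d'ailleurs invoquer directement \cite{ChaLiv} pour la nature pré-Lie de $\to_\sigma$ et ne se servir de (\ref{iso}) que pour en déduire celle de $\to$.
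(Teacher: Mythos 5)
Votre démonstration est correcte et suit essentiellement la même voie que celle de l'article : écrire $t\to_\sigma t'$ comme somme de greffes sur les sommets de $t'$, constater que l'associateur se réduit à une double somme sur les sommets du troisième arbre, donc symétrique en les deux premiers arguments, puis transporter le résultat à $\to$ via l'isomorphisme $\phi$ de \eqref{iso}. Vous explicitez en plus la vérification combinatoire de la formule $t\to_\sigma t'=\sum_{s}t'\circ_s t$ que l'article admet sans commentaire, ce qui est un léger gain de rigueur mais ne change pas l'argument.
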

\begin{proof}
Soient $t,t'$ et $t''$ des arbres dans $\mathcal{T}$, on a :
\begin{equation}
t'\rightarrow_\sigma t'' =\sum_{v\in t''}{t'\circ_{v}t''},
\end{equation}
o\`u $t'\circ_{v}t'':=$ est donn\'e par la greffe de $t'$ sur le sommet $v$
de $t''$. Par suite :\\
\begin{eqnarray}
t\rightarrow_\sigma (t'\rightarrow_\sigma t'')&=&t\rightarrow_\sigma (\sum_{v\in
  t''}{t'\circ_{v}t''})\\
                                        &=&\sum_{v\in t''}{\sum_{w\in
                                            t'}{t\circ_{w}t'\circ_{v}t''}}+\sum_{v\in t''}{\sum_{w\in t''}{t\circ_{w}(t'\circ_{v}t'')}}.
\end{eqnarray}\\
De m\^eme on a :
\begin{equation}
(t\rightarrow_\sigma t')\rightarrow_\sigma t'' =\sum_{v\in t''}{\sum_{w\in
    t'}{t\circ_{w}(t'\circ_{v}t'')}}.
\end{equation}
Ainsi
$ t\rightarrow_\sigma (t'\rightarrow_\sigma t'')-(t\rightarrow_\sigma
t')\rightarrow_\sigma t''=\sum_{v\in t''}{\sum_{w\in
    t''}{t\circ_{w}(t'\circ_{v}t'')}}$. On voit que le membre de droite est
sym\'etrique en $t$ et $t'$, ce qui montre que $\to_\sigma$ est pr\'e-Lie \`a
gauche. L'assertion pour la loi $\to$ s'en d\'eduit imm\'ediatement gr\^ace
\`a \eqref{iso}.\\
\end{proof}
\section{L'alg\`ebre de Hopf $\mathcal{H}$}
\subsection{D\'efinition} \cite{CEM}
Soit $\mathcal{T}'$ l'espace vectoriel engendr\'e par les arbres qui
contiennent au moins une ar\^ete. On consid\`ere $\mathcal{H}$ comme l'alg\`ebre sym\'etrique de $\mathcal{T}'$
i.e. $\mathcal{H}=S(\mathcal{T}')$. L'unit\'e de $\mathcal{H}$ est
identifi\'ee \`a l'arbre sans ar\^etes $\racine$. Cette alg\`ebre est caract\'eris\'ee par
son  coproduit $\Delta$ tel que $\Delta(\racine)=\racine\otimes\racine$ et  pour tout  arbre $t$ diff\'erent de l'arbre vide on d\'efinit :
\begin{equation}
\Delta(t)=\sum_{s \smop{sous-for\^et de}  t}{s\otimes t/s},
\end{equation}
o\`u une sous-for\^et $s$ de $t$ est soit la for\^et triviale $\racine$ et
dans ce cas $t/s =t$, soit
une collection $(t_{1},t_{2},...,t_{n})$ de sous-arbres disjoints de $t$ et
non triviaux (i.e. ayant chacun au moins une ar\^ete). L'arbre $t/s$ est
l'arbre contract\'e que l'on obtient en \'ecrasant chaque composante de la
sous-for\^et sur un point (voir \cite{CEM}).
 \begin{exam}
\begin{equation}
\Delta (\arbreza)=\racine\otimes\arbreza+\arbreza\otimes\racine+2\echela\otimes\arbrey+\echela\otimes\echelb+\echelb\otimes\echela+\arbrey\otimes\echela+\echela\echela\otimes\echela.
\end{equation}
\end{exam}
Cette alg\`ebre est une alg\`ebre de Hopf gradu\'ee \cite{F}, la graduation
\'etant suivant le nombre des ar\^etes.
\subsection{Dual gradu\'e de $\mathcal{H}$}
Soit $\mathcal{H}^{0}$ le dual gradu\'e de $\mathcal{H}$, soit $(Z_s ) ,( s~ \text{for\^et dans }\mathcal{H})$ la base duale du
$\mathcal{H}$. Pour tout $t\in\mathcal{T}'$, $Z_t$ est un caract\`ere infinit\'esimal (donc
primitif dans $\mathcal{H}^{0}$). Si on note par $\star$ le produit de
convolution dans $\mathcal{H}^{0}$, on obtient que l'ensemble des
caract\`eres infinit\'esimaux de $\mathcal{H}$ est une alg\`ebre de Lie
pour le crochet suivant :
\begin{equation}
[Z_{t}, Z_{u}]=Z_{t}\star Z_{u}-Z_{u}\star Z_{t},
\end{equation}
o\`u $t$ et $u$ sont des arbres dans $\mathcal{T}'$. On a :\\
\begin{eqnarray*}
<Z_{t}\star Z_{u},v>&=&<Z_{t}\otimes Z_{u}, \Delta(v)>\\
                    & &\\
                    &=&\sum_{s \smop{dans} v}{<Z_{t},s><Z_{u},v/s >}\\
                    & &\\
                    &=&\mathcal{N}(t,u,v),
\end{eqnarray*}\\
o\`u $\mathcal{N}(t,u,v)$ est le nombre de sous arbres de $v$ isomorphes \`a
$t$ tel que le contract\'e $v/t$ soit isomorphe \`a $u$.
\subsection{La loi pr\'e-Lie d'insertion} 
On d\'efinit la loi $\vartriangleright$ comme l'insertion d'un arbre dans un
autre (voir \cite{CEM}), pour deux arbres $t$ et $t'$ on d\'efinit :
\begin{equation}
t\vartriangleright t' =\sum_{x~\text{arbre}}{\mathcal{N}(t,t',x)x}.
\end{equation} 
On obtient alors :
\begin{equation}
Z_{t}\star Z_{t'}-Z_{t'}\star Z_{t}=Z_{t\vartriangleright
  t'-t'\vartriangleright t}.
\end{equation}
Si $\Pi$ d\'esigne la projection de $\mathcal{H}^{0}$ sur
$Prim(\mathcal{H}^{0})$ parall\`element \`a $\mathcal{H}' =<Z_{f}>$ o\`u $f$ est
une for\^et non triviale (produit d'au moins deux arbres). On note $\widetilde{\Pi} =Id_{\mathcal{H}^{0}}-\Pi$, on aura :
\begin{equation}
Z_{t\vartriangleright t'}=\Pi (Z_{t}\star Z_{t'})
\end{equation}
\begin{lem}
Soit $t$ un arbre et $f$ une for\^et non vide. Alors :
\begin{equation}
\Pi(Z_{t}\star Z_{f})=\Pi(Z_{t}\star\Pi(Z_{f})).
\end{equation} 
\end{lem}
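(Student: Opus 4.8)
The plan is to reduce the stated identity, by linearity, to one simple combinatorial fact about the coproduct $\Delta$ of $\mathcal{H}$. Writing $Z_f=\Pi(Z_f)+\widetilde{\Pi}(Z_f)$ and using that both $\star$ and $\Pi$ are linear, the identity $\Pi(Z_t\star Z_f)=\Pi(Z_t\star\Pi(Z_f))$ is equivalent to $\Pi\bigl(Z_t\star\widetilde{\Pi}(Z_f)\bigr)=0$. Now $\widetilde{\Pi}(Z_f)$ lies in $\mathcal{H}'=\ker\Pi$, hence is a linear combination of elements $Z_g$ with $g$ a nontrivial forest (a product of at least two trees), so it is enough to prove that $\Pi(Z_t\star Z_g)=0$ for every such $g$.

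Next I would turn the vanishing of $\Pi$ into a vanishing of coordinates. Since $\Pi$ is the projection onto $Prim(\mathcal{H}^0)$ with kernel $\mathcal{H}'$, an element of $\mathcal{H}^0$ is killed by $\Pi$ as soon as it belongs to $\mathcal{H}'$, i.e. as soon as all of its coordinates on single-tree basis vectors $Z_v$ (and on $Z_{\racine}$) vanish. So it suffices to check that $<Z_t\star Z_g,v>=0$ for every single tree $v$ (including $v=\racine$). Expanding the convolution product,
$$<Z_t\star Z_g,v>=<Z_t\otimes Z_g,\Delta(v)>=\sum_{s}{<Z_t,s>\,<Z_g,v/s>},$$
the sum running over the sub-forests $s$ of $v$.

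The heart of the matter --- and really the only step with any content --- is the observation that, $v$ being a single tree, the contracted object $v/s$ is again \emph{one} tree for every sub-forest $s$: crushing the pairwise disjoint subtrees making up $s$ to points leaves the graph connected (and acyclic), so $v/s$ is never a product of two or more trees. Since $g$ is a nontrivial forest it is isomorphic to no single tree, hence $<Z_g,v/s>=0$ for each $s$, and therefore $<Z_t\star Z_g,v>=0$ for all $v$. Thus $Z_t\star Z_g\in\mathcal{H}'$ and $\Pi(Z_t\star Z_g)=0$, which proves the lemma. (Note that the forest-components of $Z_t\star Z_g$ need not vanish --- only its single-tree component does --- which is exactly why the projection $\Pi$ appears.)

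I do not anticipate a genuine obstacle: the argument comes down to this connectedness remark about the contraction operation in $\mathcal{H}$, plus the bookkeeping of the duality $u\mapsto Z_u$. The single point that needs a little care is to remember that $Z_g$ is the basis vector dual to the \emph{forest} $g$, so that $<Z_g,w>\neq 0$ forces $w$ to be isomorphic to $g$ as a forest; once this is granted the vanishing is immediate.
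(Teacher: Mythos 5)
Your proof is correct and follows essentially the same route as the paper's: everything rests on the same key observation that the contraction $v/s$ of a sub-forest $s$ of a single tree $v$ is again a single tree, so that $\langle Z_g, v/s\rangle=0$ whenever $g$ is a nontrivial forest. The only (cosmetic) difference is that you merge the paper's two cases ($f$ a tree, $f$ a nontrivial forest) into one step via the decomposition $Z_f=\Pi(Z_f)+\widetilde{\Pi}(Z_f)$.
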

\begin{proof}
-Si $f$ est un arbre on a directement le r\'esultat.\\
-Si $f$ est une for\^et non triviale, soit $u$ un arbre. Alors :\\
\begin{eqnarray*}
<Z_{t}\star Z_{f}, u>&=&<Z_{t}\otimes Z_{f},\Delta(u)>\\
                    &=&\sum_{s~ \text{sous-for\^et de } u}{<Z_{t},s><Z_{f},u/s
                     >}\\
                    &=&0~~\text{car }u/s \text{ arbre.}
\end{eqnarray*}
Ainsi : $$ \Pi(Z_{t}\star Z_{f})=\Pi(Z_{t}\star\Pi(Z_{f}))=0.$$\\
\end{proof}
\begin{lem}\label{pitilde}
Soient $t,u$ deux arbres alors :
\begin{equation}
\widetilde{\Pi}(Z_{t}\star Z_{u})=\widetilde{\Pi}(Z_{u}\star Z_{t}).
\end{equation}
\end{lem}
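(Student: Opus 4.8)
The statement $\widetilde\Pi(Z_t\star Z_u)=\widetilde\Pi(Z_u\star Z_t)$ says that the ``non-primitive part'' of the convolution product on two infinitesimal characters is symmetric, equivalently that $Z_t\star Z_u - Z_u\star Z_t$ is primitive --- which we already know since $Z_t,Z_u$ are primitive and the bracket of primitives is primitive. So the cleanest route is: the commutator $[Z_t,Z_u]=Z_t\star Z_u - Z_u\star Z_t$ is primitive in $\mathcal H^0$, hence $\Pi([Z_t,Z_u])=[Z_t,Z_u]$ and therefore $\widetilde\Pi([Z_t,Z_u])=0$; since $\widetilde\Pi$ is linear this gives $\widetilde\Pi(Z_t\star Z_u)=\widetilde\Pi(Z_u\star Z_t)$ immediately.

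First I would recall that $\mathcal H^0$ is a Hopf algebra and that, by the lemma cited from \cite{CEM}, each $Z_t$ with $t\in\mathcal T'$ is an infinitesimal character, i.e. primitive for the coproduct dual to the product of $\mathcal H$. Then I would invoke the standard fact (already used in the proof of the analogous Lemme for $\mathcal H_{CK}$) that the space of primitive elements is a Lie subalgebra: the bracket of two primitives is again primitive. Applying this to $Z_t$ and $Z_u$ yields that $Z_t\star Z_u - Z_u\star Z_t\in\mathrm{Prim}(\mathcal H^0)$.

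Next I would use the definition of $\widetilde\Pi=\mathrm{Id}-\Pi$, where $\Pi$ is the projection onto $\mathrm{Prim}(\mathcal H^0)$ parallel to the span $\mathcal H'$ of the $Z_f$ for $f$ a non-trivial forest. By construction $\Pi$ restricts to the identity on $\mathrm{Prim}(\mathcal H^0)$, so $\widetilde\Pi$ vanishes on $\mathrm{Prim}(\mathcal H^0)$. Hence $\widetilde\Pi(Z_t\star Z_u - Z_u\star Z_t)=0$, and by linearity of $\widetilde\Pi$ we conclude $\widetilde\Pi(Z_t\star Z_u)=\widetilde\Pi(Z_u\star Z_t)$.

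The only genuine subtlety --- and the one point I would be careful to state explicitly --- is that this decomposition $\mathcal H^0=\mathrm{Prim}(\mathcal H^0)\oplus\mathcal H'$ is well defined, i.e. that every $Z_s$ for $s$ a forest is the sum of a primitive element and a combination of $Z_f$'s with $f$ a product of at least two trees, and that the two summands are complementary. This is essentially the structure statement underlying the definition of $\Pi$ just above the lemma; granting it, the argument above is complete and requires no computation with the explicit coproduct $\Delta$ of $\mathcal H$. I do not expect any real obstacle here; the content of the lemma is really just the observation that $\widetilde\Pi$ kills primitives together with the fact that commutators of primitives are primitive.
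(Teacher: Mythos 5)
Your argument is correct and is exactly the one the paper gives: since $Z_t$ and $Z_u$ are primitive and the primitive elements of $\mathcal{H}^{0}$ form a Lie algebra under $\star$, the commutator $Z_t\star Z_u-Z_u\star Z_t$ is primitive and is therefore killed by $\widetilde{\Pi}$. Your additional remark about the well-definedness of the decomposition $\mathrm{Prim}(\mathcal{H}^{0})\oplus\mathcal{H}'$ is a reasonable point of care but does not change the substance; the two proofs coincide.
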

\begin{proof}
Les \'el\'ements primitifs de $\mathcal{H}^{0}$ forment une alg\`ebre de Lie
pour la convolution $\star$, ce qui prouve :\\
$$\widetilde{\Pi}(Z_{t}\star Z_{u} - Z_{u}\star Z_{t})=0.$$\\
\end{proof}
\begin{prop}
La loi d'insertion est une loi pr\'e-Lie \`a gauche.
\end{prop}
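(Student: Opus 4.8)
Le plan est de mener le calcul dans le dual gradu\'e $\mathcal{H}^{0}$, en exploitant l'associativit\'e du produit de convolution $\star$ et les deux lemmes qui pr\'ec\`edent, dans le m\^eme esprit que la discussion pr\'ec\'edente. Comme $t\mapsto Z_{t}$ est une application lin\'eaire injective (les $Z_{t}$, $t$ arbre, faisant partie d'une base de $\mathcal{H}^{0}$) et que $\vartriangleright$ est bilin\'eaire, il suffit d'\'etablir la relation pr\'e-Lie \`a gauche pour trois arbres quelconques $t,t',t''\in\mathcal{T}'$, c'est-\`a-dire l'\'egalit\'e
\[
(t\vartriangleright t')\vartriangleright t''-t\vartriangleright(t'\vartriangleright t'')=(t'\vartriangleright t)\vartriangleright t''-t'\vartriangleright(t\vartriangleright t'').
\]
Je commencerais par traduire les deux produits it\'er\'es en termes de $\star$ et de $\Pi$. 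De $Z_{a\vartriangleright b}=\Pi(Z_{a}\star Z_{b})$, de la bilin\'earit\'e de $\vartriangleright$ et de l'identit\'e $\Pi(Z_{t}\star Z_{t'})=\sum_{x}\mathcal{N}(t,t',x)Z_{x}=Z_{t\vartriangleright t'}$, on tire
\[
Z_{(t\vartriangleright t')\vartriangleright t''}=\Pi\bigl(\Pi(Z_{t}\star Z_{t'})\star Z_{t''}\bigr),\qquad Z_{t\vartriangleright(t'\vartriangleright t'')}=\Pi\bigl(Z_{t}\star\Pi(Z_{t'}\star Z_{t''})\bigr).
\]

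Ensuite j'\'ecrirais, dans chaque facteur, $Z_{a}\star Z_{b}=\Pi(Z_{a}\star Z_{b})+\widetilde{\Pi}(Z_{a}\star Z_{b})$. En rempla\c cant et en utilisant l'associativit\'e de $\star$ (qui donne $\Pi\bigl((Z_{t}\star Z_{t'})\star Z_{t''}\bigr)=\Pi\bigl(Z_{t}\star(Z_{t'}\star Z_{t''})\bigr)$), les deux termes ``complets'' se simplifient dans l'associateur, et il ne reste que
\[
Z_{(t\vartriangleright t')\vartriangleright t''}-Z_{t\vartriangleright(t'\vartriangleright t'')}=\Pi\bigl(Z_{t}\star\widetilde{\Pi}(Z_{t'}\star Z_{t''})\bigr)-\Pi\bigl(\widetilde{\Pi}(Z_{t}\star Z_{t'})\star Z_{t''}\bigr).
\]
Le point cl\'e est alors l'annulation du premier terme du membre de droite. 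Par d\'efinition, $\widetilde{\Pi}$ est la projection sur $\mathcal{H}'=\langle Z_{f},\ f\text{ for\^et non triviale}\rangle$, donc $\widetilde{\Pi}(Z_{t'}\star Z_{t''})$ est une combinaison lin\'eaire finie de $Z_{f}$ avec $f$ for\^et non triviale~; pour une telle for\^et, le lemme qui pr\'ec\`ede le Lemme~\ref{pitilde} donne $\Pi(Z_{t}\star Z_{f})=0$. Il reste donc
\[
Z_{(t\vartriangleright t')\vartriangleright t''}-Z_{t\vartriangleright(t'\vartriangleright t'')}=-\,\Pi\bigl(\widetilde{\Pi}(Z_{t}\star Z_{t'})\star Z_{t''}\bigr),
\]
et le Lemme~\ref{pitilde}, qui affirme $\widetilde{\Pi}(Z_{t}\star Z_{t'})=\widetilde{\Pi}(Z_{t'}\star Z_{t})$ (traduction du fait que les primitifs de $\mathcal{H}^{0}$ forment une alg\`ebre de Lie pour $\star$), montre que cette expression est inchang\'ee lorsqu'on \'echange $t$ et $t'$. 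On transporte enfin cette \'egalit\'e le long de l'injection $t\mapsto Z_{t}$, puis on conclut par bilin\'earit\'e.

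Le seul point un peu d\'elicat est l'\'etape d'annulation~: il faut y invoquer le fait que $\widetilde{\Pi}$ prend ses valeurs dans l'espace engendr\'e par les for\^ets \emph{non triviales} (et non par toutes les for\^ets), ce qui est pr\'ecis\'ement l'hypoth\`ese sous laquelle $\Pi(Z_{t}\star Z_{f})=0$ a \'et\'e \'etabli~; tout le reste n'est qu'une manipulation formelle du produit associatif $\star$ combin\'ee aux deux lemmes pr\'ec\'edents. En particulier, aucune description explicite de l'insertion $\vartriangleright$ au niveau des arbres n'est requise, au-del\`a de la relation $Z_{t\vartriangleright t'}=\Pi(Z_{t}\star Z_{t'})$ et de son extension bilin\'eaire.
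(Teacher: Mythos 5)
Votre preuve est correcte et suit essentiellement la m\^eme d\'emarche que celle de l'article : on travaille dans $\mathcal{H}^{0}$ via $Z_{a\vartriangleright b}=\Pi(Z_{a}\star Z_{b})$, on utilise l'associativit\'e de $\star$ et le lemme non num\'erot\'e pour ramener l'associateur \`a $\pm\,\Pi\bigl(\widetilde{\Pi}(Z_{t}\star Z_{t'})\star Z_{t''}\bigr)$, puis le Lemme~\ref{pitilde} fournit la sym\'etrie en $t,t'$. La seule diff\'erence est cosm\'etique (vous d\'ecomposez $\mathrm{Id}=\Pi+\widetilde{\Pi}$ dans les deux facteurs et annulez un terme r\'esiduel, l\`a o\`u l'article substitue directement), et votre remarque finale sur la n\'ecessit\'e que $\widetilde{\Pi}$ prenne ses valeurs dans l'espace des for\^ets non triviales est exacte.
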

\begin{proof}
Soient $t,u$ et $v$ trois arbres, on a :\\
\begin{eqnarray*}
Z_{t\vartriangleright (u\vartriangleright v)-(t\vartriangleright
  u)\vartriangleright v}&=&\Pi(Z_{t}\star Z_{u \vartriangleright
  v})-\Pi(Z_{t\vartriangleright u }\star Z_{v})\\
                        & &\\
                        &=&\Pi(Z_{t}\star \Pi(Z_{u \vartriangleright v
                        })-\Pi(\Pi(Z_{t}\star Z_{u})\star Z_{v})\\
                        & &\\
                        &=&\Pi(Z_{t}\star Z_{u}\star Z_{v})-\Pi(\Pi(Z_{t}\star
                        Z_{u})\star Z_{v})\\
                        & &\\
                        &=&\Pi (\widetilde{\Pi}(Z_{t}\star Z_{u})\star
                        Z_{v})\\
                        & &\\
                        &=&\Pi(\widetilde{\Pi}(Z_{u}\star Z_{t})\star
                        Z_{v})~~~\mop{(d'apr\`es le lemme \ref{pitilde})}\\
                        & &\\
                        &=&Z_{u\vartriangleright (t\vartriangleright
                          v)-(u\vartriangleright t)\vartriangleright v},
\end{eqnarray*}
ce qui implique que :
$$t\vartriangleright (u\vartriangleright v)-(t\vartriangleright
u)\vartriangleright v =u\vartriangleright (t\vartriangleright
v)-(u\vartriangleright t)\vartriangleright v.$$
Par suite $\vartriangleright$ est une loi pr\'e-Lie \`a gauche.\\
\end{proof}
\begin{rmk}
La proposition \ref{greffe} peut se montrer d'une mani\`ere analogue.
\end{rmk}
\section{Relation entre $\mathcal{H}$ et $\mathcal{H}_{CK}$}
 L'alg\`ebre $\mathcal{H}$ coagit \`a gauche  sur l'alg\`ebre de
 Connes-Kreimer $\mathcal{H}_{CK}$    ( voir \cite{CEM} ) par l'unique
 morphisme d'alg\`ebre $ \Phi
 :\mathcal{H}_{CK}\longrightarrow \mathcal{H}\otimes \mathcal{H}_{CK}$
 d\'efini par : $\Phi (\un)=\racine\otimes \un$ et $\Phi (t)=\Delta(t)$ si
 $t\neq \un $.
\begin{thm}\label{diagramme}(voir le th\'eor\`eme $6$ de \cite{CEM} )
L'application $\Phi$ v\'erifie :
\begin{equation}
(Id_{\mathcal{H}}\otimes\Delta_{CK})\circ \Phi=m^{1,3}\circ
(\Phi\otimes\Phi)\circ\Delta_{CK},
\end{equation}
i.e. le diagramme suivant commute 

\diagramme{
\xymatrix{{\mathcal H}_{CK}\ar[r]^{\Phi}\ar[d]_{\Delta_{CK}}
&{\mathcal H}\otimes{\mathcal H}_{CK}\ar[dd]^{I\otimes\Delta_{CK}}\\
{\mathcal H}_{CK}\otimes{\mathcal H}_{CK}\ar[d]_{\Phi\otimes\Phi}&\\
{\mathcal H}\otimes{\mathcal H}_{CK}\otimes{\mathcal H}\otimes{\mathcal
  H}_{CK}
\ar[r]_{m^{1,3}}&{\mathcal H}\otimes{\mathcal H}_{CK}\otimes{\mathcal H}_{CK}
}
}
o\`u :
\begin{eqnarray*}
m^{1,3}:\mathcal{H}\otimes\mathcal{H}_{CK}\otimes\mathcal{H}\otimes\mathcal{H}_{CK}&\longrightarrow&
\mathcal{H}\otimes\mathcal{H}_{CK}\otimes\mathcal{H}_{CK}\\
 a\otimes b\otimes c\otimes d& \longmapsto &ac\otimes b\otimes d
\end{eqnarray*}
\end{thm}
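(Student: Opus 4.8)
The statement is a coassociativity-type identity for the coaction $\Phi$ of the Hopf algebra $\mathcal{H}$ on $\mathcal{H}_{CK}$. Since $\Phi$ is an algebra morphism and $\Delta_{CK}$ is an algebra morphism as well, both sides of the desired identity are algebra morphisms from $\mathcal{H}_{CK}$ to $\mathcal{H}\otimes\mathcal{H}_{CK}\otimes\mathcal{H}_{CK}$ (one checks that $m^{1,3}$ intertwines the relevant products). Therefore it suffices to verify the identity on the algebra generators of $\mathcal{H}_{CK}$, i.e. on a single rooted tree $t$, and then on $\un$ separately. The plan is thus: first reduce to generators by a multiplicativity argument, handle the trivial case $t=\un$ directly, and then prove the identity on an arbitrary tree $t$ by unwinding both sides combinatorially in terms of subforests, contracted trees, and admissible cuts.

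**Key steps.** First I would record that $\Phi$ is multiplicative by definition and that $\Delta_{CK}$ is multiplicative; then check that $m^{1,3}\circ(\Phi\otimes\Phi)$ is compatible with the product (here the shuffle of the middle and outer tensor legs matters, and $m^{1,3}$ is designed precisely to merge the two $\mathcal{H}$-legs into one while keeping the two $\mathcal{H}_{CK}$-legs apart). This reduces everything to checking the identity on a tree $t\ne\un$. For such $t$, the left-hand side is $(\mathrm{Id}\otimes\Delta_{CK})\big(\sum_{s}s\otimes t/s\big) = \sum_{s}\sum_{c\in\mathcal{A}dm(t/s)} s\otimes P^{c}(t/s)\otimes R^{c}(t/s)$, where $s$ runs over subforests of $t$. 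The right-hand side is $m^{1,3}\big((\Phi\otimes\Phi)\sum_{c\in\mathcal{A}dm(t)}P^{c}(t)\otimes R^{c}(t)\big) = \sum_{c}\sum_{s_1\subseteq P^{c}(t)}\sum_{s_2\subseteq R^{c}(t)} s_1 s_2\otimes P^{c}(t)/s_1\otimes R^{c}(t)/s_2$ (plus the terms coming from $\un\otimes t$ and $t\otimes\un$ in $\Delta_{CK}(t)$, which match the $s=\racine$ and $c=$ "total" contributions on the left). The heart of the proof is a bijection between the indexing sets of the two sums: a pair $(s, c')$ with $s$ a subforest of $t$ and $c'$ an admissible cut of $t/s$ corresponds to a triple $(c, s_1, s_2)$ with $c$ an admissible cut of $t$, $s_1$ a subforest of $P^{c}(t)$, $s_2$ a subforest of $R^{c}(t)$, and moreover $s = s_1 s_2$ as forests. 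The point is that an admissible cut of the contracted tree $t/s$ "pulls back" to an admissible cut $c$ of $t$ (the edges of $t/s$ are a subset of the edges of $t$, namely those not collapsed by $s$), and conversely, and that under this correspondence the branch/trunk of $t/s$ with respect to $c'$ are exactly $P^{c}(t)/s_1$ and $R^{c}(t)/s_2$, where $s_1$, $s_2$ are the parts of $s$ lying in the branch, resp. the trunk, of $t$ with respect to $c$. One must check that admissibility is preserved in both directions — collapsing subtrees cannot create a path from the root meeting two cut edges, and conversely an admissible cut of $t$ disjoint from the interiors of the $s_i$ descends to an admissible cut of $t/s$.

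**Main obstacle.** The routine part is the multiplicativity reduction; the real work is the bijection, and the subtle point there is the interaction between the contraction operation $t\mapsto t/s$ and the cutting operations $P^{c}, R^{c}$ — specifically, verifying that a subtree of $t$ that straddles a cut edge of $t$ is forbidden (it is not a legal component of a subforest after we've fixed $c$, which is why $s$ splits cleanly as $s_1 s_2$), and that admissibility transfers correctly under contraction in both directions. I would organize this as: (i) show the edge set of $t/s$ is canonically $E(t)\setminus(\text{edges internal to }s)$; (ii) show admissible cuts correspond; (iii) show the branch and trunk transform as claimed; (iv) conclude by comparing the two expansions term by term. The edge-case bookkeeping for $\Phi(\un)=\racine\otimes\un$ versus $\Phi(t)=\Delta(t)$, and the $\un$-terms in $\Delta_{CK}$, should be dispatched at the start so they don't clutter the main bijection. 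Alternatively, since this is quoted as Theorem 6 of \cite{CEM}, one may simply cite it; but if a proof is wanted, the bijective argument above is the natural route.
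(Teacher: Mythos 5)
The paper offers no proof of this statement: it is imported verbatim as Théorème 6 de \cite{CEM}, and the ``proof'' is the citation. Your proposal is therefore doing more than the paper does, and what you propose is correct and is essentially the argument one finds in the cited source. The reduction to generators is legitimate: both sides are algebra morphisms, the right-hand one because $\Delta_{CK}$ and $\Phi\otimes\Phi$ are, and because $m^{1,3}$ respects products --- note that this last point uses the commutativity of $\mathcal{H}$, since $(a\otimes b\otimes c\otimes d)(a'\otimes b'\otimes c'\otimes d')\mapsto aa'cc'\otimes bb'\otimes dd'$ must equal $(ac\otimes b\otimes d)(a'c'\otimes b'\otimes d')=aca'c'\otimes bb'\otimes dd'$; you should say this explicitly. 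The heart of the matter is, as you identify, the bijection between pairs $(s,c')$ with $s$ a sous-forêt de $t$ and $c'\in\mathcal{A}dm(t/s)$, and triples $(c,s_1,s_2)$ with $c\in\mathcal{A}dm(t)$, $s_i$ sous-forêts de $P^c(t)$ et $R^c(t)$. The one step worth spelling out carefully is why $s$ splits as $s_1s_2$: each component of $s$ is a connected subtree containing none of the edges of $c$ (those are exactly the edges surviving in $t/s$ that $c'$ selects), hence lies entirely on one side of the cut; and admissibility transfers in both directions because contracting connected subtrees preserves the number of cut edges met along any root-to-vertex path. With those two observations made precise, your outline closes into a complete proof; alternatively the citation to \cite{CEM} suffices, which is all the paper itself does.
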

\begin{prop}\label{neutre}
 Soient $\alpha\in\mathcal{H}^{0}$ et $a\in\mathcal{H}^{0}_{CK}$ alors on a
 :
\begin{equation}
\alpha\star Z_{\racine}=Z_{\racine}\star \alpha=\alpha
\end{equation}
\begin{equation}
Z_{\racine}\star a=a\star Z_{\racine}=a
\end{equation}
\end{prop}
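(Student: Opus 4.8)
The plan is to unwind the definition of the convolution product $\star$ on $\mathcal H^0$ (and similarly on $\mathcal H^0_{CK}$) and test the claimed identities against an arbitrary forest. Recall that $\mathcal H = S(\mathcal T')$ has unit identified with the one-vertex tree $\racine$, and that the coproduct $\Delta$ on $\mathcal H$ satisfies $\Delta(\racine) = \racine\otimes\racine$. The dual basis element $Z_\racine \in \mathcal H^0$ is by definition the linear form that takes the value $1$ on $\racine$ and $0$ on every other forest; since $\racine$ is the unit of $\mathcal H$, $Z_\racine$ is precisely the counit, i.e. the neutral element for the convolution product $\star$. That already gives $\alpha\star Z_\racine = Z_\racine\star\alpha = \alpha$ for every $\alpha\in\mathcal H^0$. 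Let me write this out: for a forest $s$,
\begin{equation}
\langle \alpha\star Z_\racine, s\rangle = \langle \alpha\otimes Z_\racine, \Delta(s)\rangle
= \sum_{s'\ \smop{sous-for\^et de}\ s} \langle\alpha, s'\rangle\,\langle Z_\racine, s/s'\rangle .
\end{equation}
The form $Z_\racine$ kills every forest except $\racine$, and $s/s' = \racine$ forces $s' = s$ (contracting the whole tree to a point), so only the term $s' = s$ survives and the sum equals $\langle\alpha, s\rangle$. The computation for $Z_\racine\star\alpha$ is symmetric, using $\langle Z_\racine, s'\rangle \neq 0 \iff s' = \racine$, in which case $s/s' = s$.

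For the second pair of identities we argue the same way on $\mathcal H^0_{CK}$, where now the relevant coproduct is $\Delta_{CK}$ and $Z_\racine$ is regarded as an element of $\mathcal H^0$, not $\mathcal H^0_{CK}$. Here the key point is the explicit description of the coaction $\Phi$ from the previous section and Theorem \ref{diagramme}, or more simply the definition of the action of $\mathcal H^0$ on $\mathcal H^0_{CK}$ by $\langle Z_t\star a, v\rangle = \langle Z_t\otimes a, \Phi(v)\rangle$ together with $\Phi(t) = \Delta(t)$. Since $\Phi(\racine) = \racine\otimes\un$ and, for any tree $v$, the only sub-forest of $v$ whose contraction is $v$ itself is the trivial one $\racine$, testing $Z_\racine\star a$ against $v$ leaves only the term $\racine\otimes v$ in $\Phi(v)$, giving $\langle Z_\racine, \racine\rangle\langle a, v\rangle = \langle a, v\rangle$. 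The identity $a\star Z_\racine = a$ follows symmetrically, or alternatively from the fact that $Z_\racine$ is a group-like–type neutral element compatible with the module structure.

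I expect no serious obstacle here: the statement is essentially the observation that $Z_\racine$ is the counit/neutral element, and the only thing to be careful about is bookkeeping — namely checking that "$s' = \racine$" and "$s/s' = \racine$" really do pin down the surviving term in each convolution sum, and that $Z_\racine$ plays the neutral role on both $\mathcal H^0$ and the $\mathcal H^0$-module $\mathcal H^0_{CK}$. The mildly delicate point, if any, is making precise that the contraction $v/s$ equals $v$ exactly when $s$ is the trivial sub-forest, which is immediate from the definition of $\Delta$ recalled above. Once that is granted, all four equalities drop out by direct evaluation on a forest, so the proof is a short unwinding with no inductive or structural input beyond the definitions already in place.
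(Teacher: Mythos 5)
Your argument is correct, but it is worth noting that the paper itself does not prove this proposition at all: its ``proof'' is the single line \emph{voir corollaire 10 de \cite{CEM}}, i.e.\ a deferral to an external reference. What you have written is essentially the verification that that corollary encapsulates, carried out from the definitions available in this paper: you identify $Z_{\racine}$ with the counit of $\mathcal H$ (it is $1$ on the unit $\racine$ and $0$ on every other forest), which immediately gives the first pair of identities by the standard neutrality of the counit for convolution, and you check the second pair by evaluating against a forest and observing that $\Phi(\un)=\racine\otimes\un$ while for $v\neq\un$ the only term of $\Phi(v)=\Delta(v)$ surviving against $Z_{\racine}$ in the first slot is $\racine\otimes v$ (resp.\ $v\otimes\racine$ when $Z_{\racine}$ sits in the second slot, since $v/s=\racine$ forces $s=v$). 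The one place where you lean on material the present paper never writes down is the identity $a\star Z_{\racine}=a$: the coaction $\Phi$ given here is only a \emph{left} coaction $\mathcal H_{CK}\to\mathcal H\otimes\mathcal H_{CK}$, so the product $a\star Z_{\racine}$ with $a\in\mathcal H_{CK}^0$ on the left is not literally defined by anything in this text, and ``follows symmetrically'' presupposes the right comodule structure (or the bimodule conventions) of \cite{CEM}. That is a gap in self-containedness rather than in mathematics, and it is exactly the point at which the paper's citation to \cite{CEM} is doing real work; your direct computation buys transparency for the three identities that can be checked here, at the cost of having to import the same external structure for the fourth.
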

\begin{proof}
voir corollaire $10$ de \cite{CEM}.
\end{proof}
\section{Quatre lois pr\'e-Lie}
En plus des deux lois pr\'e-Lie $\to$ et $\to_\sigma$ d\'efinies au paragraphe
\ref{gr}, on d\'efinit pour $u,v\in \mathcal{T}'$ :
\begin{equation}
u \vartriangleright v =\sum_{w~\text{arbre}}{\mathcal{N}(u,v,w)w},
\end{equation}
o\`u $\mathcal{N}(u,v,w)$ d\'esigne le nombre de sous arbres de $w$ isomorphes
\`a $u$ tels que le contract\'e $w/u$ soit isomorphe \`a $v$.
\begin{exam}
\begin{equation}
\echela \vartriangleright \arbrey = 2 \arbreza +\arbrema +3\arbrezb .
\end{equation}
\end{exam}
On d\'efinit maintenant une autre loi pr\'e-Lie li\'ee \`a la loi
$\vartriangleright$  par $\vartriangleright_{\sigma}$ tels que pour tout
$u,v\in\mathcal{T}'$ :
\begin{equation}
u \vartriangleright_{\sigma} v =\sum_{w~\text{arbre}}{\mathcal{M}(u,v,w)w},
\end{equation}
o\`u
$\mathcal{M}(u,v,w)=\mathcal{N}(u,v,v)\frac{\sigma(u)\sigma(v)}{\sigma(w)}$ repr\'esente le nombre de mani\`eres d'inserer $u$ dans $v$ pour obtenir
$w$.
\begin{exam}
\begin{equation}
\echela \vartriangleright_{\sigma} \arbrey = 4 \arbreza + \arbrema + \arbrezb
\end{equation}
\end{exam}
On a de fa\c con \'evidente une relation analogue \`a \eqref{iso}~:
\begin{equation}\label{iso2}
\phi(u\vartriangleright_{\sigma}v)=\phi(u)\vartriangleright\phi(v),
\end{equation}
La question qui se pose ici est la suivante : y a-t-il une relation reliant les
deux structures pr\'e-Lie $\rightarrow$ et $\vartriangleright$ (ou de fa\c con
\'equivalente, $\rightarrow_\sigma$ et $\vartriangleright_\sigma$)?
\section{Relation entre les lois pr\'e-Lie de greffe et d'insertion}
Dans ce paragraphe nous montrons qu'il y a une relation reliant les deux
structures pr\'e-Lie pr\'ec\'edentes.
\begin{thm}\label{premier theo}
Pour tout $\alpha \in \mathcal{H}^0$ et $a,b \in \mathcal{H}^{0} _{CK}$ on a :
\begin{equation}
\sum_{(\alpha)}{(\alpha_{1}\star a)\ast (\alpha_{2}\star b )}=\alpha
\star (a\ast b),
\end{equation}
o\`u l'on utilise la notation de Sweedler :
  $\Delta(\alpha)=\sum_{(\alpha)}{\alpha_{1} \otimes \alpha_{2}}$
  (\cite{Abe80}, \cite{H1}, \cite{Sw69}).
\end{thm}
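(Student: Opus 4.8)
The plan is to deduce the identity by transposing Theorem~\ref{diagramme}; indeed, Theorem~\ref{premier theo} is essentially its dual version, expressing that the convolution algebra $\mathcal{H}^{0}_{CK}$ is a left module algebra over the Hopf algebra $\mathcal{H}^{0}$. First I would recall the relevant transposes: the left action $\star:\mathcal{H}^{0}\otimes\mathcal{H}^{0}_{CK}\to\mathcal{H}^{0}_{CK}$ is the transpose of the coaction $\Phi$, so that $\langle\alpha\star a,v\rangle=\langle\alpha\otimes a,\Phi(v)\rangle$ for every forest $v$ of $\mathcal{H}_{CK}$; the product $\ast$ on $\mathcal{H}^{0}_{CK}$ is the transpose of $\Delta_{CK}$, so that $\langle a\ast b,v\rangle=\langle a\otimes b,\Delta_{CK}(v)\rangle$; and the coproduct $\Delta$ on $\mathcal{H}^{0}$ appearing in the statement is the transpose of the product of $\mathcal{H}$, so that $\langle\alpha,xy\rangle=\sum_{(\alpha)}\langle\alpha_{1},x\rangle\langle\alpha_{2},y\rangle$ for $x,y\in\mathcal{H}$. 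Because $\Phi$, $\Delta_{CK}$ and the product of $\mathcal{H}$ are homogeneous for the respective gradings, the operations $\star$ and $\ast$ take their values in the graded duals; hence both sides of the claimed equality are genuine elements of $\mathcal{H}^{0}_{CK}$, and since $\mathcal{H}^{0}_{CK}$ is the full graded dual of $\mathcal{H}_{CK}$ it is enough to show that the two sides take the same value on an arbitrary forest $v\in\mathcal{H}_{CK}$.

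So I would fix such a forest $v$ and evaluate both sides. On the one hand, by associativity of the pairing,
$$\langle\alpha\star(a\ast b),v\rangle=\langle\alpha\otimes(a\ast b),\Phi(v)\rangle=\langle\alpha\otimes a\otimes b,(Id_{\mathcal{H}}\otimes\Delta_{CK})\Phi(v)\rangle.$$
On the other hand, writing $\Delta_{CK}(v)=\sum v_{(1)}\otimes v_{(2)}$ and unfolding the two actions,
\begin{align*}
\Big\langle\sum_{(\alpha)}(\alpha_{1}\star a)\ast(\alpha_{2}\star b),v\Big\rangle
&=\sum_{(\alpha)}\langle(\alpha_{1}\star a)\otimes(\alpha_{2}\star b),\Delta_{CK}(v)\rangle\\
&=\sum_{(\alpha)}\langle(\alpha_{1}\otimes a)\otimes(\alpha_{2}\otimes b),(\Phi\otimes\Phi)\Delta_{CK}(v)\rangle.
\end{align*}
If we write $(\Phi\otimes\Phi)\Delta_{CK}(v)=\sum X_{1}\otimes X_{2}\otimes X_{3}\otimes X_{4}$, with the first and third tensor factors in $\mathcal{H}$ and the second and fourth in $\mathcal{H}_{CK}$ (summations understood), this last line equals $\sum\langle a,X_{2}\rangle\langle b,X_{4}\rangle\sum_{(\alpha)}\langle\alpha_{1},X_{1}\rangle\langle\alpha_{2},X_{3}\rangle=\sum\langle\alpha,X_{1}X_{3}\rangle\langle a,X_{2}\rangle\langle b,X_{4}\rangle$, which is precisely $\langle\alpha\otimes a\otimes b,m^{1,3}((\Phi\otimes\Phi)\Delta_{CK}(v))\rangle$.

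Comparing the two computations, the identity to be proved becomes
$$\langle\alpha\otimes a\otimes b,(Id_{\mathcal{H}}\otimes\Delta_{CK})\Phi(v)\rangle=\langle\alpha\otimes a\otimes b,m^{1,3}((\Phi\otimes\Phi)\Delta_{CK}(v))\rangle,$$
valid for all $v,\alpha,a,b$; and this is immediate from $(Id_{\mathcal{H}}\otimes\Delta_{CK})\circ\Phi=m^{1,3}\circ(\Phi\otimes\Phi)\circ\Delta_{CK}$, which is Theorem~\ref{diagramme}. This yields $\sum_{(\alpha)}(\alpha_{1}\star a)\ast(\alpha_{2}\star b)=\alpha\star(a\ast b)$. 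I do not expect a real obstacle: all the substance lies in Theorem~\ref{diagramme} (Theorem~6 of \cite{CEM}), and the only points needing care are the homogeneity remarks that legitimize testing against all forests $v$ and the bookkeeping of the four tensor factors acted on by $m^{1,3}$.
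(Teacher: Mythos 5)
Your proposal is correct and follows exactly the paper's approach: the paper's proof consists of the single sentence ``En dualisant le diagramme du th\'eor\`eme \ref{diagramme} on obtient le r\'esultat'', and your argument is precisely that dualization, written out in full (transposing $\Phi$, $\Delta_{CK}$ and the product of $\mathcal{H}$, then testing both sides against an arbitrary forest $v$). The bookkeeping of the four tensor factors under $m^{1,3}$ and the homogeneity remark are accurate, so this is simply a detailed version of the paper's own proof.
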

\begin{proof}

En dualisant le diagramme du th\'eor\`eme \ref{diagramme} on obtient le r\'esultat.\\
\end{proof}
En cons\'equence si $\alpha\in\mathcal{H}^{0}$ est un caract\`ere
infinit\'esimal, en particulier $\alpha$  est un \'el\'ement primitif donc :
\begin{equation}
\Delta(\alpha)=\alpha\otimes Z_{\racine}+Z_{\racine}\otimes \alpha.
\end{equation}
 Par suite pour tout $a,b\in\mathcal{H}_{CK}^{0}$ on obtient :
\begin{eqnarray*}
\alpha\star (a\ast b)&=&(\alpha\star
a)\ast(Z_{\racine}\star b)+(Z_{\racine}\star a)\ast(\alpha\star b)\\
                                   &=&(\alpha\star a)\ast
                                 b+a\ast(\alpha\star\ b), ~~(\mop{d'apr\`es la
                                   proposition \ref{neutre} }).
\end{eqnarray*}
Soient $\Pi_{CK}$ la projection de $\mathcal{H}_{CK}^{0}$ sur les \'el\'ements
primitifs de $\mathcal{H}_{CK} ^{0}$ parall\'element \`a
$\mathcal{H}_{CK}'=<\delta_f >$ o\`u $f$ est une for\^et non triviale. 
\begin{lem}\label{lemme un}
Soit $f$ une for\^et non vide et $t$ un arbre dans $\mathcal{T}$
alors on a :
\begin{equation}
\Pi_{CK}(Z_{t}\star\delta_{f})=\Pi_{CK}(Z_{t}\star\Pi_{CK}(\delta_{f}))
\end{equation}
\end{lem}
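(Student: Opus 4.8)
The plan is to mimic, in the Connes--Kreimer setting, the proof of the analogous statement for $\mathcal{H}^{0}$ given earlier in the excerpt (the lemma about $\Pi(Z_{t}\star Z_{f})$). So first I would reduce to the two cases according to the shape of the forest $f$. If $f$ is a single tree, there is nothing to prove: $\Pi_{CK}(\delta_{f})=\delta_{f}$ by definition of $\Pi_{CK}$, since a tree is a primitive element of $\mathcal{H}_{CK}^{0}$, and the identity is trivially true. Hence the only real content is the case where $f=t_{1}\cdots t_{n}$ is a nontrivial forest, i.e. $n\geq 2$, and then $\delta_{f}$ lies in $\mathcal{H}_{CK}'=\langle \delta_{g}:g\text{ nontrivial forest}\rangle$, so that $\Pi_{CK}(\delta_{f})=0$. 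What must then be shown is that $\Pi_{CK}(Z_{t}\star\delta_{f})=0$ as well.

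For that, the key step is to evaluate $Z_{t}\star\delta_{f}$ against an arbitrary tree $u\in\mathcal{T}$ and show the result vanishes. Unwinding the convolution product, $\langle Z_{t}\star\delta_{f},u\rangle=\langle Z_{t}\otimes\delta_{f},\Phi(u)\rangle$ where $\Phi$ is the coaction $\mathcal{H}_{CK}\to\mathcal{H}\otimes\mathcal{H}_{CK}$ (equivalently, using $\Phi(u)=\Delta(u)$ for $u\neq\un$, this is $\sum_{s}\langle Z_{t},s\rangle\langle\delta_{f},u/s\rangle$ summed over subforests $s$ of $u$). The crucial observation is that in each term of this sum the second tensor factor $u/s$ is a \emph{contracted tree}, hence again a tree --- never a nontrivial forest. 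Therefore $\langle\delta_{f},u/s\rangle=0$ for every $s$ because $f$ has at least two connected components while $u/s$ has only one. Consequently $\langle Z_{t}\star\delta_{f},u\rangle=0$ for every tree $u$, so $Z_{t}\star\delta_{f}$ itself has no primitive component; applying $\Pi_{CK}$ gives $\Pi_{CK}(Z_{t}\star\delta_{f})=0$, which matches $\Pi_{CK}(Z_{t}\star\Pi_{CK}(\delta_{f}))=\Pi_{CK}(Z_{t}\star 0)=0$.

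One subtlety I would flag is the direction of the pairing and the precise role of $\Phi$ versus $\Delta$: the product $Z_{t}\star\delta_{f}$ lives in a mixed setting, being built from the coaction of $\mathcal{H}$ on $\mathcal{H}_{CK}$, so one should be careful that ``$\star$'' here means the convolution associated to $\Phi$ (as in Theorem~\ref{premier theo} and Proposition~\ref{neutre}), not the internal product $\ast$ of $\mathcal{H}_{CK}^{0}$. Once this bookkeeping is pinned down, the argument is exactly the ``contracted quotient of a tree is a tree'' principle already used for $\mathcal{H}$, and there is no real obstacle; the only thing to get right is that the relevant coproduct/coaction never produces a disconnected forest in the slot paired against $\delta_{f}$. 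I would therefore write the proof as a short two-case split, with the forest case being the one-line vanishing computation above.

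\begin{proof}
Si $f$ est un arbre, $\delta_f$ est primitif donc $\Pi_{CK}(\delta_f)=\delta_f$ et l'\'egalit\'e est imm\'ediate. Supposons $f=t_1\cdots t_n$ une for\^et non triviale, i.e. $n\geq 2$. Alors $\delta_f\in\mathcal{H}_{CK}'$, d'o\`u $\Pi_{CK}(\delta_f)=0$ et $\Pi_{CK}(Z_t\star\Pi_{CK}(\delta_f))=0$. Il reste \`a voir que $\Pi_{CK}(Z_t\star\delta_f)=0$, ce qui \'equivaut \`a $\langle Z_t\star\delta_f,u\rangle=0$ pour tout arbre $u$. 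Or
$$
\langle Z_t\star\delta_f,u\rangle=\langle Z_t\otimes\delta_f,\Phi(u)\rangle
=\sum_{s\smop{ sous-for\^et de }u}\langle Z_t,s\rangle\,\langle\delta_f,u/s\rangle .
$$
Dans chaque terme, $u/s$ est un arbre contract\'e, donc un arbre, tandis que $f$ poss\`ede au moins deux composantes connexes~; ainsi $\langle\delta_f,u/s\rangle=0$ pour toute sous-for\^et $s$. Donc $\langle Z_t\star\delta_f,u\rangle=0$ pour tout arbre $u$, ce qui donne $\Pi_{CK}(Z_t\star\delta_f)=0=\Pi_{CK}(Z_t\star\Pi_{CK}(\delta_f))$.
\end{proof}
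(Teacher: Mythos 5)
Your proof is correct and follows essentially the same route as the paper's: the same split according to whether $f$ is a single tree or a nontrivial forest, and the same key observation that the contracted quotient $u/s$ is always a tree, so that $\langle\delta_f,u/s\rangle=0$ whenever $f$ has at least two components. The only cosmetic difference is that the paper treats $u=\un$ as a separate subcase via $\Phi(\un)=\racine\otimes\un$, which your computation absorbs since $\langle\delta_f,\un\rangle=0$ for $f$ non vide.
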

\begin{proof}
- Si $f$ est un arbre, on a directement le r\'esultat car
$\Pi_{CK}(\delta_{f})=\delta_{f}$.\\
- Si $f$ est une for\^et non triviale, soit $u$ un arbre dans
$\mathcal{H}_{CK}$ alors deux cas se pr\'esentent : $u=\un$ ou $u \in
\mop{Ker}\epsilon$, o\`u $\epsilon$ est la co-unit\'e pour l'alg\`ebre de Hopf
$\mathcal{H}_{CK}$.\\
* Si $u=\un$ alors :
\begin{eqnarray*}
< Z_{t}\star \delta_{f},u >&=&< Z_{t}\otimes\delta_{f},\Phi(\un)>\\
                      &=&< Z_{t}\otimes\delta_{f}, \racine \otimes \un >\\
                      &=& 0,
\end{eqnarray*}
or $\Pi_{CK}(\delta_{f})=0,$ donc le r\'esultat est vrai pour l'arbre $\un$.\\
* Si $u\in\mop{Ker}\epsilon$ on a :\\
\begin{eqnarray*}
< Z_{t}\star \delta_{f}, u >&=&< Z_{t}\otimes\delta_{f},\Delta(u) >\\
                           & &\\
                           &=&\sum_{s~~ \smop{sous-for\^et~ de}~
                             u}{Z_{t}(s)\delta_{f}(u/s)}\\
                           & &\\
                           &=& 0,
\end{eqnarray*}
Car le contract\'e $u/s$ est un arbre. Ainsi on obtient que :\\
$$\Pi_{CK}(Z_{t}\star\delta_{f})=0=\Pi_{CK}(Z_{t}\star
\Pi_{CK}(\delta_{f})).$$
Par suite on conclut que :
$$\Pi_{CK}(Z_{t}\star\delta_{f})=\Pi_{CK}(Z_{t}\star\Pi_{CK}(\delta_{f})).$$
\end{proof}
\begin{lem}\label{lemme deux}
Pour tout arbre $t\in\mathcal{T}'$ et $u\in\mathcal{T}$ on a :
\begin{equation}
Z_{t}\star \delta_{u}=\delta_{t\vartriangleright u}
\end{equation}
\end{lem}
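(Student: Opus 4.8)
The strategy is to compare both sides as linear forms on $\mathcal{H}_{CK}$, evaluating them on an arbitrary forest $v$ (the forests, together with the empty forest $\un$, forming a basis of $\mathcal{H}_{CK}$). Recall that the product $\star\colon\mathcal{H}^0\otimes\mathcal{H}_{CK}^0\to\mathcal{H}_{CK}^0$ is dual to the coaction $\Phi$ (this is the product already used in Theorem~\ref{premier theo} and in the proof of Lemma~\ref{lemme un}), so that
$$\langle Z_t\star\delta_u,v\rangle=\langle Z_t\otimes\delta_u,\Phi(v)\rangle .$$
I would first dispose of the degenerate cases. For $v=\un$ we have $\Phi(\un)=\racine\otimes\un$, hence $\langle Z_t\star\delta_u,\un\rangle=\langle Z_t,\racine\rangle\langle\delta_u,\un\rangle=0$ since $t\neq\racine$; and $\langle\delta_{t\vartriangleright u},\un\rangle=0$ because $t\vartriangleright u$ is a linear combination of trees. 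For $v=t_1\cdots t_n$ a non-trivial forest ($n\geq 2$), using that $\Phi$ is an algebra morphism one writes $\Phi(v)=\prod_i\Delta(t_i)$, and in each factor the $\mathcal{H}_{CK}$-component $t_i/s_i$ is a single tree; hence the $\mathcal{H}_{CK}$-component of every term of $\Phi(v)$ is a product of $n\geq 2$ trees, and pairing it with $\delta_u$ (dual to the single tree $u$) gives $0$. So $\langle Z_t\star\delta_u,v\rangle=0$, and $\langle\delta_{t\vartriangleright u},v\rangle=0$ as well.

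The substantive case is $v$ a tree, where $\Phi(v)=\Delta(v)=\sum_{s\,\smop{sous-for\^et de}\,v}s\otimes v/s$, so that $\langle Z_t\star\delta_u,v\rangle=\sum_s\langle Z_t,s\rangle\langle\delta_u,v/s\rangle$. Since $t$ lies in $\mathcal{T}'$, it is a single non-trivial tree, so $\langle Z_t,s\rangle$ equals $1$ precisely when the sub-forest $s$ consists of one non-trivial subtree of $v$ isomorphic to $t$, and vanishes otherwise (in particular when $s=\racine$ is the trivial sub-forest). For such an $s$ the contracted tree $v/s$ is exactly $v/t$, so the sum counts the subtrees $s\cong t$ of $v$ with $v/s\cong u$, i.e. it equals $\mathcal{N}(t,u,v)$. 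On the other side, $t\vartriangleright u=\sum_w\mathcal{N}(t,u,w)\,w$ by definition, whence $\langle\delta_{t\vartriangleright u},v\rangle=\mathcal{N}(t,u,v)$. Thus the two linear forms agree on every basis element of $\mathcal{H}_{CK}$, which gives $Z_t\star\delta_u=\delta_{t\vartriangleright u}$.

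The computation is elementary; the one step to be handled with a little care is the identification, in the main case, of the sub-forests $s$ of $v$ contributing to $\langle Z_t,s\rangle$ — the point being that, because $t$ is connected, only the ``single subtree'' sub-forests matter, and the corresponding $v/s$ is then precisely the contraction $v/t$ entering the definition of $\mathcal{N}(t,u,v)$. The only other subtlety is the separate treatment of $\un$, forced by $\Phi(\un)=\racine\otimes\un$ rather than by the formula for $\Delta$; note that $v=\racine$ needs no special attention, being a tree with $\Phi(\racine)=\racine\otimes\racine$, which yields $0$ on the left and matches $\mathcal{N}(t,u,\racine)=0$ on the right since $t$ has at least two vertices.
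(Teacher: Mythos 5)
Your proof is correct and follows essentially the same route as the paper's: both evaluate the two linear forms against a basis of $\mathcal{H}_{CK}$, splitting into the three cases $\un$, a single tree, and a product of at least two trees, with the tree case reducing to the count $\mathcal{N}(t,u,v)$ via the coaction $\Phi$. Your write-up is in fact slightly more careful than the paper's on the identification of the contributing sub-forests and on the degenerate tree $\racine$, but there is no difference of method.
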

\begin{proof}
Soit $w$ un arbre dans $\mathcal{T}$ alors :\\
-Si $w=\un$ alors $<Z_{t}\star \delta_{u},w>=0=<\delta_{t\triangleright u},\un>.$\\
-Si $w\in\mop{Ker}\epsilon$ alors :
\begin{eqnarray*}
<Z_{t}\star \delta_{u},w>&=&<Z_{t}\otimes\delta_{u},\Delta(w)>\\
                         & &\\
                         &=&\sum_{s~ \smop{sous-for\^et de
                           }~w}{<Z_{t},s><\delta_{u},u/s>}\\
                         & &\\
                         &=&<\delta_{t\vartriangleright u},w>.
\end{eqnarray*}
-Si $w=w_{1}w_{2}$ o\`u $w_{1}, w_{2}$ deux arbres .\\
On a $\Phi$ est un morphisme d'alg\`ebre  donc :\\
\begin{eqnarray*}
<Z_{t}\star \delta_{u},w_{1}w_{2}>&=&<Z_{t}\otimes
\delta_{u},\Phi(w_{1})\Phi(w_{2})>\\
                                  & &\\
                                  &=&\sum_{s_{1}~ \smop{de}~ w{_1},~ s_{2}~
                                    \smop{de}~
                                    w_{2}}{<Z_{t},s_{1}s_{2}><\delta_{u},w_{1}/s_{1}w_{2}/s_{2}>}\\
                                  & &\\
                                  &=&0=<\delta_{t\vartriangleright
                                    u},w_{1}w_{2}>.
\end{eqnarray*}
\end{proof}
\begin{thm}\label{joli formule}
Pour tout $t\in \mathcal{T}'$ et $u,v \in \mathcal{T}$ on a :
\begin{equation}
t\vartriangleright(u\rightarrow v)=(t\vartriangleright u)\rightarrow v
+u\rightarrow (t\vartriangleright v).
\end{equation}
\end{thm}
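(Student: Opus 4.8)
The plan is to translate the desired identity into the convolution algebras $\mathcal{H}^0$ and $\mathcal{H}_{CK}^0$, where it becomes a consequence of Theorem~\ref{premier theo}. First I would use Lemma~\ref{lemme deux} to rewrite the three terms of the claimed identity: $\delta_{t\vartriangleright(u\to v)}=Z_t\star\delta_{u\to v}$, and since $\delta_{u\to v}=\Pi_{CK}(\delta_u\star\delta_v)$ one has $\delta_{u\to v}=\delta_u\ast\delta_v$ up to the projection; similarly $\delta_{(t\vartriangleright u)\to v}$ and $\delta_{u\to(t\vartriangleright v)}$ will be expressed through $Z_t\star\delta_u$, $Z_t\star\delta_v$ and the grafting product $\ast$. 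The goal is thus to prove, inside $\mathcal{H}_{CK}^0$, the identity
\begin{equation}
Z_t\star(\delta_u\ast\delta_v)=(Z_t\star\delta_u)\ast\delta_v+\delta_u\ast(Z_t\star\delta_v),
\end{equation}
and then apply the projection $\Pi_{CK}$ onto primitives to recover the statement about trees. This last displayed identity is exactly the specialization of Theorem~\ref{premier theo} to $\alpha=Z_t$, using that $Z_t$ is an infinitesimal character so that $\Delta(Z_t)=Z_t\otimes Z_{\racine}+Z_{\racine}\otimes Z_t$, together with Proposition~\ref{neutre} which gives $Z_{\racine}\star\delta_u=\delta_u$ and $Z_{\racine}\star\delta_v=\delta_v$. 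In fact this computation was already carried out in the excerpt right after the proof of Theorem~\ref{premier theo}, with general $a,b$; here I just set $a=\delta_u$, $b=\delta_v$.

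The main technical point will be handling the projections onto primitives correctly, i.e.\ justifying that the grafting product $u\to v$ on trees really does correspond to $\delta_u\ast\delta_v$ \emph{after} applying $\Pi_{CK}$, and that $\Pi_{CK}$ may be moved past the various $\star$-products where needed. Concretely, $\delta_u\ast\delta_v$ lives in $\mathcal{H}_{CK}^0$ but is not primitive; its primitive part is $\Pi_{CK}(\delta_u\ast\delta_v)=\delta_{u\to v}$, while $\widetilde\Pi_{CK}(\delta_u\ast\delta_v)$ is supported on $\delta_f$ for non-trivial forests $f$. So after establishing the convolution identity I would apply $\Pi_{CK}$ to both sides. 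On the left, $\Pi_{CK}\bigl(Z_t\star(\delta_u\ast\delta_v)\bigr)$: one writes $\delta_u\ast\delta_v=\delta_{u\to v}+(\text{forest terms})$ and invokes Lemma~\ref{lemme un} to discard the contribution of the forest terms, getting $\Pi_{CK}(Z_t\star\delta_{u\to v})=Z_t\star\delta_{u\to v}$ since the latter is already primitive by Lemma~\ref{lemme deux}; but $\delta_{u\to v}$ is a sum of trees, so by Lemma~\ref{lemme deux} applied termwise this equals $\delta_{t\vartriangleright(u\to v)}$. On the right, $\Pi_{CK}\bigl((Z_t\star\delta_u)\ast\delta_v\bigr)$: now $Z_t\star\delta_u=\delta_{t\vartriangleright u}$ is again a sum of trees (Lemma~\ref{lemme deux}), so $(Z_t\star\delta_u)\ast\delta_v$ is a grafting product of trees, whose primitive part is $\delta_{(t\vartriangleright u)\to v}$; symmetrically for the other term.

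Once these identifications are made, applying $\Pi_{CK}$ to the convolution identity gives
\begin{equation}
\delta_{t\vartriangleright(u\to v)}=\delta_{(t\vartriangleright u)\to v}+\delta_{u\to(t\vartriangleright v)},
\end{equation}
and since $s\mapsto\delta_s$ is a linear isomorphism onto $\mathcal{H}_{CK}^0$ (hence injective on the span of trees), the three vectors of trees themselves satisfy the asserted relation. I would structure the write-up as: (1) recall the convolution identity $\alpha\star(a\ast b)=(\alpha\star a)\ast b+a\ast(\alpha\star b)$ for primitive $\alpha$, already derived after Theorem~\ref{premier theo}; (2) set $\alpha=Z_t$, $a=\delta_u$, $b=\delta_v$; (3) apply $\Pi_{CK}$, using Lemma~\ref{lemme un} to kill forest contributions and Lemma~\ref{lemme deux} to turn each surviving $Z_t\star\delta_s$ into $\delta_{t\vartriangleright s}$; (4) conclude by injectivity of $s\mapsto\delta_s$. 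The only real subtlety is bookkeeping with $\Pi_{CK}$ versus $\widetilde\Pi_{CK}$, i.e.\ being careful that grafting of trees is the primitive part and not the full $\ast$-product; the algebra itself is immediate from the already-established identity.
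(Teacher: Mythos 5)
Your proposal is correct and follows essentially the same route as the paper: specialize the dualized coaction identity of Theorem~\ref{premier theo} to the infinitesimal character $\alpha=Z_t$, apply $\Pi_{CK}$, and use Lemmas~\ref{lemme un} and~\ref{lemme deux} to identify each projected term with $\delta_{t\vartriangleright(u\to v)}$, $\delta_{(t\vartriangleright u)\to v}$ and $\delta_{u\to(t\vartriangleright v)}$ before concluding by injectivity of $s\mapsto\delta_s$. The only blemish is a notational slip ($\delta_u\star\delta_v$ where the convolution $\ast$ of $\mathcal{H}_{CK}^0$ is meant); your extra care with $\Pi_{CK}$ versus $\widetilde\Pi_{CK}$ is exactly the bookkeeping the paper performs.
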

\begin{proof}
Soit $t\in \mathcal{T}'$ et $u,v\in \mathcal{T}$, d'apr\`es cons\'equence du
th\'eor\`eme \ref{premier theo} on a :\\
$$ Z_{t}\star
(\delta_{u}\ast\delta_{v})=(Z_{t}\star\delta_{u})\ast\delta_{v}+\delta_{u}\ast(Z_{t}\star\delta_{v})$$\\
donc :\\
$$\Pi_{CK}(Z_{t}\star(\delta_{u}\ast\delta_{v}))=\Pi_{CK}((Z_{t}\star\delta_{u})\ast\delta_{v})+\Pi_{CK}(\delta_{u}\ast(Z_{t}\star\delta_{v})).$$\\
Par suite :\\
$$\Pi_{CK}(Z_{t}\star\Pi_{CK}(\delta_{u}\ast\delta_{v}))=\Pi_{CK}(\delta_{t
  \vartriangleright u}
\ast\delta_{v})+\Pi_{CK}(\delta_{u}\ast\delta_{t\vartriangleright v}),~
(\text{lemmes \ref{lemme un}+\ref{lemme deux}}).$$\\
Ainsi \\
$$\Pi_{CK}(Z_{t}\star\delta_{u\rightarrow v})=\delta_{(t\vartriangleright
  u)\rightarrow v}+\delta_{u\rightarrow (t\vartriangleright u)},$$\\
d'o\`u :\\
$$\delta_{t\vartriangleright (u\rightarrow v)}=\delta_{(t\vartriangleright
  u)\rightarrow v}+\delta_{u\rightarrow (t\vartriangleright v)},$$\\
ce qui prouve que :\\
$$t\vartriangleright (u\rightarrow v)=(t\vartriangleright u)\rightarrow
v+ u\rightarrow (t\vartriangleright v).$$
\end{proof}
\begin{rmk}
D'apr\`es \eqref{iso}, \eqref{iso2} et le th\'eor\`eme
pr\'ec\'edent on montre aussi que pour tout $t\in\mathcal{T}'$ et $u,v\in \mathcal{T}$ :
\begin{equation}
t\vartriangleright_{\sigma}(u\rightarrow_{\sigma}v)=(t\vartriangleright_{\sigma}u)\rightarrow_{\sigma}v+u\rightarrow_{\sigma}(t\vartriangleright_{\sigma}v).
\end{equation} 
\end{rmk}
\subsection{Quelques calculs explicites}
\begin{exam}
Soient $t=\echela ,u=\racine$ et $v=\echela$ on a :
\begin{equation}
u\rightarrow v= \racine\rightarrow\echela =\echelb + 2 \arbrey ,
\end{equation}
donc :\\
\begin{eqnarray}
t\vartriangleright (u\rightarrow v)&=&\echela\vartriangleright \echelb
+2\echela\vartriangleright \arbrey\\
                                       &=&3 \echelc +2 \arbrema +\arbreza +
                                       (4\arbreza +2 \arbrema +6 \arbrezb )\\
                                       & &\\
                                       &=&3\echelc +4\arbrema +5 \arbreza +
                                       6\arbrezb
\end{eqnarray}\\
Or :
\begin{equation}
t\vartriangleright v =\echela \vartriangleright \echela =2\echelb +2 \arbrey
\end{equation}
donc :\\
$\begin{array}{ccl}
u\rightarrow (t\vartriangleright v)&=&2(\racine\rightarrow\echelb
)+2(\racine\rightarrow \arbrey )\\
                                 &=&2(\echelc +2 \arbrema +\arbreza
                                 )+2(\arbreza +3 \arbrezb )\\
                                 &=& 2\echelc +4\arbrema +4\arbreza +6
                                 \arbrezb
\end{array}$\\
par suite :
\begin{equation}
t\vartriangleright (u\rightarrow v)-u\rightarrow (t\vartriangleright
v)=\echelc +\arbreza ,
\end{equation}
or :
\begin{eqnarray*} 
(t\vartriangleright u)\rightarrow
v&=&(\echela\vartriangleright\racine)\rightarrow\echela\\
 &=&\echela\rightarrow \echela \\
&=&\echelc + \arbreza ,
\end{eqnarray*}
ce qui est la diff\'erence $t\vartriangleright (u\rightarrow
v)-u\rightarrow (t\vartriangleright v)$
\end{exam}
\begin{exam}
On prend dans cet exemple : 
\begin{equation*}
t=\echela,\ 
u=\arbrey,\ 
v=\arbrema.
\end{equation*}\\
- Calcul de $t\vartriangleright (u\rightarrow v )$.\\
On a :\\
\begin{equation*}
u\rightarrow v=\arbrey\rightarrow\arbrema
                  =\arbremb + \arbremc +2 \arbremd .
\end{equation*}\\
Donc :
\begin{equation*}
t\vartriangleright (u\rightarrow
v)=(\echela\vartriangleright\arbremb)+(\echela\vartriangleright\arbremc)+2(\echela\vartriangleright\arbremd).
\end{equation*}
Notons respectivement $(1),(2)$ et $(3)$ les termes pr\'ec\'edents.  On aura
alors :\\
$(1)=2\arbreme+3\arbremf+\arbremg+2\arbremh+2\arbremi+\arbremj+2\arbremk+3\arbreml
,\\
(2)=2\arbremm+3\arbremn+2\arbremo+3\arbremp+2\arbremq+2\arbremh+\arbremr+2\arbrems+\arbremg$\\
et\\
$(3)=4\arbremt+2\arbrems+6\arbremu+2\arbremv+4\arbremw$.\\
Par suite :
\begin{equation}\label{differenceun}
t\vartriangleright (u\rightarrow v)=(1)+(2)+(3).
\end{equation}
-Calcul de $u\rightarrow (t\vartriangleright v)$ :\\
On a :\\
\begin{eqnarray*}
t\vartriangleright v&=&\echela\vartriangleright\arbrema\\
                    &=&2\arbreha+3\arbrehb+2\arbrehc+\arbrehd
\end{eqnarray*}
Par suite :\\
\begin{equation*}
u\rightarrow (t\vartriangleright v)=2(\arbrey\rightarrow\arbreha)
                                       +3(\arbrey\rightarrow\arbrehb)
                                       +2(\arbrey\rightarrow\arbrehc)
                                       +\arbrey\rightarrow\arbrehd ,
\end{equation*}
Notons $(1'),(2'),(3')$ et $(4')$ les termes de la somme pr\'ec\'edente on
aura :\\
$(1')=2(\arbreml+\arbremg+\arbremo+\arbremi+\arbremt),\\
(2')=3(\arbremh+\arbremp+\arbremu),\\
(3')=2(\arbremq+\arbremk+2\arbrems+\arbremw),\\
(4')=\arbremj+\arbremr+2\arbremv+\arbremw .$\\
Ainsi :
\begin{equation}\label{differencedeux}
u\rightarrow (t\vartriangleright v)=(1')+(2')+(3')+(4').
\end{equation}
La diff\'erence (\ref{differenceun})-(\ref{differencedeux}), nous donne :\\
\begin{equation*}
t\vartriangleright (u\rightarrow v)-u\rightarrow (t\vartriangleright
v)=\arbreml +\arbremh+\arbremw+2\arbreme+2\arbremm
   +2\arbremt+3\arbremu+3\arbremf+3\arbremn .
\end{equation*}
-Calcul de $(t\vartriangleright u)\rightarrow v$ :\\
On a :\\
\begin{eqnarray*}
t\vartriangleright u&=&\echela\vartriangleright\arbrey\\
                    &=&\arbrema+2\arbreza+3\arbrezb
\end{eqnarray*}\\
Par suite :\\
\begin{equation*}
(t\vartriangleright u)\rightarrow  v=\arbreml+\arbremh+\arbremw
                                        +2(\arbreme+\arbremm+\arbremt)
                                        +3(\arbremf+\arbremn+\arbremu),
\end{equation*}
ce qui est bien la diff\'erence $t\vartriangleright(u\rightarrow
v)-u\rightarrow (t\vartriangleright v).$
\end{exam}
\subsection{Une approche op\'eradique}\label{operad}
Le th\'eor\`eme \ref{joli formule} est un cas particulier d'un ph\'enom\`ene
g\'en\'eral de nature op\'eradique. Rappelons qu'une op\'erade vectorielle
unitaire $\mathcal{P}$ est la donn\'ee pour tout $n\in\mathbb{N}^*$ d'un espace
vectoriel $\mathcal{P}_{n}$ muni d'une action du groupe sym\'etrique $S_n $,
et d'une loi de composition :
\begin{equation}
\gamma :\mathcal{P}_{n}\otimes\mathcal{P}_{p_1
}\otimes\cdots\otimes\mathcal{P}_{p_n }\longrightarrow \mathcal{P}_{p_1 +p_2
  +\cdots +p_n }
\end{equation}
satisfaisant des axiomes d'associativit\'e et d'\'equivariance par rapport aux
actions des groupes sym\'etriques en pr\'esence. L'unitarit\'e s'\'ecrit
$\mathcal{P}_{1}=K.e$ ( $K$ est le corps de base) o\`u $e$ v\'erifie :
\begin{equation}
\gamma(e;\beta)=\beta
\end{equation}
\begin{equation}
\gamma(\beta;e,e,\ldots,e)=\beta.
\end{equation}  
Pour tout $\alpha\in\mathcal{P}_{n},\beta\in\mathcal{P}_{p}$ et pour tout
$i\in\{1,2,\ldots,n\}$ la composition partielle $\alpha\circ_{i}\beta\in\mathcal{P}_{n+p-1}$ est
d\'efinie par :
\begin{equation}
\alpha\circ_{i}\beta=\gamma(\alpha;\underbrace{e,\ldots,e}_{i-1\smop{termes}},\beta,e,\ldots,e).
\end{equation}
La somme des compositions partielles :
\begin{equation}
\alpha\vartriangleleft\beta:=\sum_{i=1}^{n}{\alpha\circ_{i}\beta}
\end{equation}
est une loi pr\'e-Lie \`a droite \cite{Chap}.\\
L'associativit\'e et l'\'equivariance impliquent notamment :
\begin{enumerate}
\item Pour tout  $ \sigma\in S_n $ et pour tout $\tau\in S_p,$ on a :
$$\sigma\alpha\circ_{i}\tau\beta=(\sigma\circ_{i}\tau)(\alpha\circ_{i}\beta)$$
o\`u $\sigma\circ_{i}\tau$ est la permutation de $\{1,\ldots,n+p-1\}$ obtenue en
permutant $\{i,\ldots,i+p-1\}$ \`a l'aide de $\tau$, puis en permutant
$\{1,\ldots,i-1,\{i+1,\ldots,n+p-1\},i+1,\ldots,n+p-1\}$ \`a l'aide de $\sigma$.\\
\item Pour tout $\mu\in\mathcal{P}_{n}$, pour tout $\beta_{j}\in\mathcal{P}_{p_j
},j=1,\ldots,n$ et pour tout $\alpha\in\mathcal{P}_q$ on a l'\'egalit\'e suivante dans
$\mathcal{P}_{p_1 +\cdots+p_n +q-1 }$ :
\begin{equation}
\gamma(\mu;\beta_1 ,
\ldots,\beta_{j-1},\beta_{j}\circ_{i}\alpha,\beta_{j+1},\ldots,\beta_n
)=\gamma(\mu;\beta_1 ,\ldots,\beta_n )\circ_{p_1 +\cdots+p_{j-1 }+i }\alpha .
\end{equation}
\item Pour tout $\mu\in\mathcal{P}_n ,\beta_j \in\mathcal{P}_{p_j },
j=1,\ldots,n$  et pour tout  $\sigma\in S_n , \tau_{j} \in S_{p_j }$ on a :
$$\gamma(\sigma\mu;\tau_{1} \beta_1 ,\ldots,\tau_{n} \beta_n )=\gamma(\sigma;\tau_{1} ,\ldots,\tau_{n}
)\gamma(\mu;\beta_1 ,\ldots,\beta_n )$$
o\`u $\gamma(\sigma;\tau_{1} ,\ldots,\tau_{n} )$ est la permutation de $\{1,2,\ldots,p_1 +p_2
+\cdots+p_n \}$ obtenue en permutant les \'el\'ements $(p_1
+\cdots+p_{j-1}+1,\ldots,p_1 +\cdots+p_n )$ du j-\`eme bloc par $\tau_{j}$, puis en permutant
les blocs par $\sigma$.
\end{enumerate}
L'alg\`ebre libre \`a un g\'en\'erateur $A_{\mathcal{P}}$ sur l'op\'erade
$\mathcal{P}$ \cite{GLE} est donn\'ee par $\oplus_{n>0}\mathcal{P}_n/S_{n}$. Les
trois \'egalit\'es ci-dessus impliquent que les compositions partielles
passent au quotient ainsi que la loi pr\'e-Lie $\vartriangleleft$, et que nous
avons pour toute op\'eration n-aire $\mu$ sur $A_{\mathcal{P}}$ et $a_j \in
\mathcal{P}_{p_j }/S_{p_j }, j\in\{1,2,\ldots,n\}$
$$\mu(a_1 , \ldots,a_{j_1},a_{j}\circ_{i}b,a_{j+1},\ldots,a_n )=\mu(a_1 ,a_2
,\ldots,a_n )\circ_{p_1 +p_2 +\cdots +p_{j-1}+i }b$$
En sommant sur tous les $i$ possibles de $1$ jusqu'\`a $p_j$ puis sur tout les
$j$ de $1$ \`a $n$, on obtient donc :
\begin{equation}
\sum_{j=1}^{n}{\mu(a_1 ,\ldots,a_{j-1},a_{j}\vartriangleleft b,a_{j+1},\ldots,a_n
  )}=\mu(a_1 ,a_2 ,\ldots,a_n )\vartriangleleft b.
\end{equation}
Le th\'eor\`eme \ref{joli formule} en d\'ecoule en prenant pour $\mathcal{P}$
l'op\'erade pr\'e-Lie \cite{ChaLiv} et pour $\mu$ la loi de greffe \`a droite :
$$\mu(a,b)=a \leftarrow b.$$
On obtient bien :
 $$(a\leftarrow b)\vartriangleleft c=(a\vartriangleleft
c)\leftarrow b+a\leftarrow (b\vartriangleleft c).$$
Le th\'eor\`eme \ref{joli formule} s'en d\'eduit par le fait que :
$a\leftarrow b=b\rightarrow a$ et $a\vartriangleleft b=b\vartriangleright a$.
\subsection*{Remerciements}
Nous remercions vivement Muriel Livernet de nous avoir indiqu\'e l'approche
op\'eradique pr\'esent\'ee au paragraphe \ref{operad}.

\end{document}